\documentclass[11pt]{amsart}
\usepackage{amsmath,amssymb,amsthm,times}
\usepackage{mathtools,mathrsfs}
\usepackage{microtype}
\usepackage[colorlinks=true]{hyperref}

\numberwithin{equation}{section}
\theoremstyle{plain}
\newtheorem{theorem}{Theorem}[section]
\newtheorem{lemma}[theorem]{Lemma}
\newtheorem{proposition}[theorem]{Proposition}
\newtheorem{corollary}[theorem]{Corollary}
\theoremstyle{definition}

\theoremstyle{remark}
\newtheorem{remark}[theorem]{Remark}

\allowdisplaybreaks[4]
\newcommand{\dd}{\,\mathrm{d}}

\newcommand{\R}{\mathbb R}
\newcommand{\Hrad}{H^1_{\mathrm{rad}}}
\newcommand{\norm}[2]{\lVert #1\rVert_{#2}}
\begin{document}
\title[On Sirakov's equal-frequency uniqueness conjecture]{On Sirakov's equal-frequency uniqueness conjecture}


\author[H.G. Chen]{Hong-Ge Chen}
\address{Hong-Ge Chen, School of Mathematics and Statistics, Key Laboratory of Nonlinear Analysis \& Applications (Ministry of Education),
Central China Normal University, Wuhan, China}
\email{hongge\_chen@whu.edu.cn}

\author[Y. Liu]{Yong Liu}
\address{Yong Liu, School of Mathematics and Statistics, Beijing Technology and Business University, Beijing, China}
\email{yliumath@btbu.edu.cn}

\author[J.C. Wei]{Juncheng Wei}
\address{Juncheng Wei, Department of Mathematics, Chinese University of Hong Kong, Shatin, New Territories, Hong Kong}
\email{wei@math.cuhk.edu.hk}


\author[W. Yang]{Wen Yang}
\address{Wen Yang, Department of Mathematics, Faculty of Science, University of Macau, Taipa, Macau, China}
\email{wenyang@um.edu.mo}

\begin{abstract}
Let $N\in\{2,3\}$, $0<\mu _1\leq\mu _2$, and $0<\beta<\mu _1$.  We prove
that the equal-frequency two-component cubic Schr\"odinger system
\[
 -\Delta u+u=\mu _1u^3+\beta uv^2,
 \qquad
 -\Delta v+v=\mu _2v^3+\beta u^2v
 \quad\text{in }\R^N
\]
has exactly one positive solution in $H^1(\R^N)\times H^1(\R^N)$ modulo simultaneous translations.  More precisely, every positive solution is a simultaneous translate of the synchronized state constructed from the unique positive radial solution of $-\Delta w+w=w^3$ in $\R^N$.  This settles Sirakov's equal-frequency uniqueness conjecture throughout the weak-coupling range. 

The main difficulty in the proof is to exclude radial solutions for which the ratio of the normalized components is nonconstant.  After normalization, the two components satisfy scalar equations with a common potential. We construct a weighted Pohozaev functional for the system together with a correction term and prove that both the corrected functional and the associated weighted functional are strictly positive. Combining these sign properties with a radial flux identity and an auxiliary quotient associated with the component ratio forces synchronization.
\end{abstract}

\subjclass[2020]{35J47, 35J50, 35B40, 35B65}
\keywords{coupled nonlinear Schr\"odinger system, positive solution,
uniqueness, synchronisation, Pohozaev identity}

\maketitle

\section{Introduction and main results}

Coupled nonlinear Schr\"odinger equations arise in various different contexts such as the study of
multicomponent Bose--Einstein condensates and nonlinear optical media, where
the components represent interacting condensates or optical modes and the
coupling coefficient measures the intercomponent interaction.  When the two
linear frequencies agree, a natural rescaling leads to the stationary system
\begin{equation}\label{1-1}
 \begin{cases}
  -\Delta u+u=\mu _1u^3+\beta uv^2,\\
  -\Delta v+v=\mu _2v^3+\beta u^2v,
 \end{cases}
 \qquad x\in\R^N.
\end{equation}
Here $\mu _1,\mu _2>0$ are the self-interaction coefficients and $\beta$ is
the coupling coefficient.  For $N\leq3$, system \eqref{1-1} is the
Euler--Lagrange system of
\[
 \begin{aligned}
 \mathcal E(u,v)
 ={}&\frac12\int_{\R^N}
   \bigl(|\nabla u|^2+u^2+|\nabla v|^2+v^2\bigr)\,\dd x\\
 &-\frac14\int_{\R^N}
   \bigl(\mu _1u^4+2\beta u^2v^2+\mu _2v^4\bigr)\,\dd x .
 \end{aligned}
\]
Thus positive $H^1$ solutions are finite-energy standing-waves. It turns out that
their structure depends sensitively on the sign and size of the coupling.

For equation \eqref{1-1}, one may consider the following three types of questions: (i) existence of positive solutions; (ii) existence and uniqueness of least-energy solutions; and (iii) uniqueness among all positive solutions. The first two questions have been studied extensively by variational methods. For finite cubic systems in dimensions at most three, Lin and Wei proved several sufficient conditions for the existence or nonexistence of ground states in terms of the signs of the couplings and positive-definiteness properties of an associated matrix \cite{LinWeiGroundState,LinWeiGroundStateErratum}. Sirakov identified parameter regimes for the existence or nonattainment of least-energy two-component states and, in specified intermediate regimes, for the nonexistence of nonstandard nonnegative solutions \cite{Sirakov}. For unequal frequencies under specified orderings of the linear and self-interaction coefficients, Chen and Zou identified an optimal coupling threshold for the existence of least-energy solutions \cite[Theorem~1.2 and Remarks~1.1--1.3]{ChenZou}. Related work studies least-energy spike solutions in bounded domains \cite{LinWeiSpikes} and concentration under spatially varying trapping potentials \cite{LinWeiTrapping}. It also includes positive multi-bump bound states for weak repulsive coupling \cite{LinWeiSolitary} and least-energy symbiotic bright solitons in the self-defocusing, strongly cross-attractive regime \cite{LinWeiSymbiotic}. Such existence and ground-state results do not by themselves classify all positive solutions of the autonomous equal-frequency system \eqref{1-1} and a stronger uniqueness question will be the central subject of this paper.

We now return to the autonomous equal-frequency problem and assume, without
loss of generality, that $0<\mu _1\leq\mu _2$.  Let $w$ denote the unique
positive radial solution of
\[
 -\Delta w+w=w^3\qquad\text{in }\R^N.
\]
It is classical that this equation admits a unique positive solution which decays exponentially at infinity; see \cite{BerestyckiLionsPeletier,Kwong}. For $\beta>0$ and $\beta\notin[\mu_1,\mu_2]$, the pair
\begin{equation}
\label{11}
 \left(
  \sqrt{\frac{\mu _2-\beta}{\mu _1\mu _2-\beta^2}}\,w,
  \sqrt{\frac{\mu _1-\beta}{\mu _1\mu _2-\beta^2}}\,w
 \right)
\end{equation}
is a positive synchronised solution of \eqref{1-1}.  Sirakov \cite[Remark~2]{Sirakov} conjectured that, in this equal-frequency setting, this is the unique positive solution.  We refer to this statement as Sirakov's equal-frequency uniqueness conjecture.  It is distinct from the so-called Sirakov open problem concerning the optimal existence range when the two linear frequencies are unequal; see \cite[Section~1.3]{WeiZhongZou}.  The equal-frequency uniqueness conjecture naturally splits into the strong-coupling range $\beta>\mu _2$ and the weak-coupling range $0<\beta<\mu _1$. The conjecture is shown to be true in the strong-coupling range by Wei and Yao \cite{WeiYao}.

For sufficiently small positive coupling, Ikoma established uniqueness modulo translations in a more general constant-coefficient framework encompassing \eqref{1-1} \cite[Theorem~1.1]{Ikoma}. Wei and Yao later provided an alternative perturbative proof for small coupling, established the equal-frequency conjecture in the strong-coupling regime, and completed the positive-coupling classification in dimension one \cite{WeiYao}. Under the additional assumption $\mu _1\ne\mu _2$, Chen and Zou proved uniqueness for weak couplings sufficiently close to $\min\{\mu _1,\mu _2\}$ from below \cite[Theorem~1.1]{ChenZou}. Zhou and Wang subsequently proved the analogous near-endpoint result in the symmetric case $\mu _1=\mu _2=\mu$, $N\in\{2,3\}$, and for the related system, see \cite[Theorems~1.1 and~1.2]{ZhouWang}. Mandel developed uniqueness criteria for a broader class of component-symmetric semilinear elliptic systems, while in the equal-self-interaction cubic specialisation these criteria recover the corresponding Wei--Yao ranges \cite[Corollary~1.4]{Mandel}. Thus, in both the symmetric and nonsymmetric cases, the previously known weak-coupling results covered sufficiently small coupling and a neighbourhood of the upper endpoint, but left an intermediate interval untreated. The present theorem fills this interval and thereby covers the full range $0<\beta<\min\{\mu _1,\mu _2\}$. For further discussion, see Bartsch--Zhong--Zou
\cite[Remark~2.6]{BartschZhongZou} and Wei--Zhong--Zou \cite[Section~5]{WeiZhongZou}.

  Here and below, simultaneous translation by $x_0\in\R^N$ means
$(u,v)\mapsto(u(\,\cdot-x_0),v(\,\cdot-x_0))$. For a radial function about the origin we write $f(x)=f(r)$, where
$r:=|x|$.  We also set
\[ \Hrad(\R^N):=\{f\in H^1(\R^N):~f(x)=f(|x|)\}. \]

The remaining uniqueness question for positive solutions in the
weak-coupling range is: 
\begin{center}
\bfseries
For $N\in\{2,3\}$ and $0<\beta<\min\{\mu _1,\mu _2\}$, does every
positive solution of \eqref{1-1} necessarily take the synchronised
form?
\end{center}
The present paper answers this question affirmatively.  Our main result in this paper is the following classification theorem:

\begin{theorem}
\label{theorem1-1}
Let $N\in\{2,3\}$, $0<\mu _1\leq\mu _2$, 
$0<\beta<\mu _1$.  If $(u,v)\in\Hrad(\R^N)\times\Hrad(\R^N)$ is a positive  solution of \eqref{1-1}, then necessarily
\begin{equation}\label{1-4}
 (u,v)=
 \left(
  \sqrt{\frac{\mu _2-\beta}{\mu _1\mu _2-\beta^2}}\,w,
  \sqrt{\frac{\mu _1-\beta}{\mu _1\mu _2-\beta^2}}\,w
 \right),
\end{equation}
where $w\in\Hrad(\R^N)$ is the unique positive solution of
\begin{equation}\label{1-5}
 -\Delta w+w=w^3\qquad\text{in }\R^N.
\end{equation} 
\end{theorem}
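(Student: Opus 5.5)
We plan to reduce the statement to showing that the ratio of the normalized components is constant. Set
\[
 a:=\sqrt{\tfrac{\mu _2-\beta}{\mu _1\mu _2-\beta^2}},\qquad b:=\sqrt{\tfrac{\mu _1-\beta}{\mu _1\mu _2-\beta^2}},\qquad U:=u/a,\quad V:=v/b.
\]
Then \eqref{1-1} becomes
\[
 -\Delta U+U=\bigl(\mu _1a^2U^2+\beta b^2V^2\bigr)U,\qquad
 -\Delta V+V=\bigl(\beta a^2U^2+\mu _2b^2V^2\bigr)V,
\]
and $\mu _1a^2+\beta b^2=\beta a^2+\mu _2b^2=1$. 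Write the potential as $P:=\alpha U^2+(1-\alpha)V^2+$ a correction. Equivalently, both equations can be written as $-\Delta f+f=Q_f f$. The two weights differ by $(\mu _1a^2-\beta a^2)(U^2-V^2)$ and $(\mu _2 b^2-\beta b^2)(V^2-U^2)$, respectively. If $U\equiv V$, the common equation is $-\Delta U+U=U^3$, so $U=w$ by the uniqueness of \cite{Kwong}, which gives \eqref{1-4}. Thus the whole task is to exclude radial positive solutions with $U\not\equiv V$. By ODE uniqueness, in that case $U-V$ cannot vanish identically near any point.

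Next consider the quotient $\rho:=U/V$ and the Wronskian-type flux $W(r):=r^{N-1}(U'V-UV')$. Subtracting the equations gives
\[
 (r^{N-1}V^2\rho')' = r^{N-1}\,UV\bigl[(\mu _1-\beta)a^2U^2-(\mu _2-\beta)b^2V^2\bigr]\cdot(\text{sign-adjusted}),
\]
which is positive where $\rho>1$ and negative where $\rho<1$, since $(\mu _1-\beta)a^2,(\mu _2-\beta)b^2>0$. Exponential decay together with regularity at $0$ shows that $W(0)=W(\infty)=0$. A maximum-principle argument on the intervals where $\rho-1$ has a fixed sign then shows that $\rho-1$ must change sign. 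Integrating the flux identity with suitable multipliers produces integral constraints of the form $\int r^{N-1}UV(U^2-V^2)(\ldots)=0$.

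The core step is a weighted Pohozaev argument. Multiply each equation by $r^{N}f'$ and by $f$ with radial weights, combined in the way the abstract suggests. The goal is a functional
\[
 \mathcal P(U,V)=\int_0^\infty r^{N-1}\Bigl[c_1(U'^2+V'^2)+c_2(U^2+V^2)-c_3(\text{quartic})\Bigr]\dd r+\text{correction},
\]
which must vanish by the Pohozaev/Nehari identities. We then aim to show it is strictly positive whenever $\rho\not\equiv1$. The restriction $N\in\{2,3\}$ should enter through the Pohozaev coefficient $(N-2)/2$ versus $N/4$ in the cubic case. Concretely, we would express the quartic term as $U^2V^2$ plus a multiple of $(U^2-V^2)^2$. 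The term $(U^2-V^2)^2$ carries coefficient $(\mu _1-\beta)(\mu _2-\beta)$-type factors of definite sign when $\beta<\mu _1$. We would then use the flux identity and the sign structure of $\rho-1$ to control the cross terms.

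The main obstacle is proving strict positivity of the corrected weighted functional in the intermediate range of $\beta$, where perturbative arguments fail. Our first attempt would be to choose the weight as a function of $r$ adapted to $w$, say $r^{N-1}$ times a power, and to pick the correction as a boundary-flux term $W(r)\,\rho$-type. This choice should make the integrand a sum of squares plus a term controlled by the auxiliary quotient $(\rho-1)/\rho'$ or $V^2\rho'/(\rho-1)$. Monotonicity of that quotient, derived from the flux ODE, should give the sign. A contradiction with $\mathcal P=0$ then forces $\rho\equiv1$.
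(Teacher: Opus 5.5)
\textbf{There is a genuine gap.} The step that carries the whole theorem is producing a weighted Pohozaev-type quantity with a provable strict sign, and then converting that sign into $U\equiv V$. You only describe this step aspirationally (``should make the integrand a sum of squares'', ``should give the sign''). No weight, correction term, or auxiliary quotient is actually written down and checked. What you do carry out is the easy part: the normalization, the reduction to Kwong once $U\equiv V$, and the flux identity.

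You also miss the structure that makes the hard part tractable. Your $a,b$ are the paper's $\lambda_1,\lambda_2$. In the notation of \eqref{3-1}, one has $(\mu_1-\beta)\lambda_1^2=(\mu_2-\beta)\lambda_2^2=\delta$, $\beta\lambda_1^2=\varepsilon\theta_1$ and $\beta\lambda_2^2=\varepsilon\theta_2$. Hence both equations take the exact common-potential form $-\Delta y_i+(1-\varepsilon P)y_i=\delta y_i^3$, with $P=\theta_1y_1^2+\theta_2y_2^2$ and the same $\delta$ for $i=1,2$. Your flux identity is then $(r^{N-1}V^2\rho')'=-\delta r^{N-1}V^4\rho(\rho^2-1)$. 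This is \emph{negative} where $\rho>1$, the opposite of what you state. The argument needs exactly this sign: a ratio starting above $1$ initially decreases, and at its first critical point it lies strictly below $1$.

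\textbf{What the paper supplies at that point.}

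\emph{A pointwise Pohozaev function.} It is not integrated: $J_i=\frac{a}{2}(y_i')^2+by_iy_i'+\frac{c}{2}y_i^2-\frac{a}{2}\mathcal V y_i^2+\frac{a\delta}{4}y_i^4$, with weights $a(r)=r^{4m/3}$, $b=\frac{ma}{3r}$, $c=\frac{m(3-m)}{9r^2}a$ and $m=N-1$. These weights are forced by cancelling the $(y_i')^2$, $y_iy_i'$ and $y_i^4$ terms. What remains is $J_i'=Gy_i^2$ with the \emph{same} $G$ for both components.

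\emph{The correction.} It is not a flux term but $-\frac{\varepsilon a}{4}P^2$. Set $\mathscr J=\theta_1J_1+\theta_2J_2$ and $\mathscr K=\mathscr J-\frac{\varepsilon a}{4}P^2$. Then the $\mathcal V'$ terms cancel and $\mathscr K'=\frac{maP}{3r}\bigl(\frac{(3-m)(2m-3)}{9r^2}-1\bigr)$. This sign pattern, together with $\mathscr K(\infty)=0$ (and $\mathscr K(0^+)=0$ when $N=3$), gives $\mathscr K>0$. Hence $\mathscr J>0$ for every $r>0$ and every positive solution, synchronized or not. This, rather than the coefficients $\frac{N-2}{2}$ versus $\frac N4$, is where $N\in\{2,3\}$ enters.

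\emph{The contradiction.} It does not come from ``$\mathcal P=0$ versus $\mathcal P>0$''. It comes from $Z=(\eta^2J_1-J_2)/(\theta_1+\theta_2\eta^2)$ with $\eta=y_2/y_1$, which satisfies $Z'=2\eta\eta'\mathscr J/(\theta_1+\theta_2\eta^2)^2$, so $Z$ moves with $\eta$.
\begin{itemize}
\item At a first critical point $r_*$ of $\eta$, one has $\eta(r_*)<1$. There $\eta^2J_1-J_2=\frac{a\delta}{4}\eta^2y_1^4(1-\eta^2)>0$, so $Z(r_*)>0$. But $Z(0^+)=0$ and $Z'<0$ on $(0,r_*)$ force $Z(r_*)<0$.
\item If $\eta$ has no critical point, $Z$ decreases strictly from $0$, yet it tends to $0$ at infinity.
\end{itemize}

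\textbf{Obstacle in your integral formulation.} Since $\rho-1$ must change sign, an integrand built from $(U^2-V^2)^2$ plus cross terms has no pointwise sign. Moreover, the quotients you propose to control it with are singular: $(\rho-1)/\rho'$ at $r=0$, where $\rho'(0)=0\neq\rho(0)-1$, and $V^2\rho'/(\rho-1)$ at the crossing $\rho=1$ that you yourself show must occur. As it stands, the proposal is a programme, not a proof.
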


\begin{remark}
Note that the pair in \eqref{1-4} is indeed a positive radial solution of \eqref{1-1}. 
The space $\Hrad(\R^N)$ fixes the symmetry centre at the origin.  Thus
Theorem~\ref{theorem1-1} gives exactly one positive radial solution in the radial class. Moreover, any positive solution $  (u,v)  $ in $  H^1_{\rm rad}(\mathbb{R}^N)\times H^1_{\rm rad}(\mathbb{R}^N)  $ consists of $  C^\infty  $ positive functions. We may therefore restrict attention to classical solutions of \eqref{1-1} throughout the paper.
\end{remark}

The radiality hypothesis can be removed due to the classical result of \cite{BuscaSirakov}. Specifically, we have

\begin{corollary}
\label{cor1-3}
Let $N\in\{2,3\}$, $0<\mu _1\leq\mu _2$, $0<\beta<\mu _1$.  Every
positive solution
$(u,v)\in H^1(\R^N)\times H^1(\R^N)$ of \eqref{1-1} has, for some
$x_0\in\R^N$, the form
\begin{equation}\label{1-6}
 (u(x),v(x))=
 \left(
  \sqrt{\frac{\mu _2-\beta}{\mu _1\mu _2-\beta^2}}\,w(x-x_0),
  \sqrt{\frac{\mu _1-\beta}{\mu _1\mu _2-\beta^2}}\,w(x-x_0)
 \right),
\end{equation}
where $w$ is the same solution as in Theorem \ref{theorem1-1}.
\end{corollary}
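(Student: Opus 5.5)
The statement immediately above is Corollary~\ref{cor1-3}. The plan is to reduce it to Theorem~\ref{theorem1-1} by a symmetry result. Let $(u,v)\in H^1(\R^N)\times H^1(\R^N)$ be a positive solution of \eqref{1-1}, with $N\in\{2,3\}$, $0<\mu_1\le\mu_2$ and $0<\beta<\mu_1$.

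First I would record the regularity and decay of $(u,v)$. Since $N\le3$, the cubic nonlinearity is subcritical. Standard elliptic bootstrapping therefore gives $u,v\in C^\infty$, $u,v\to0$ at infinity, and exponential decay of $u,v$ together with their derivatives. The comparison for this decay uses $-\Delta u+\frac12u\le0$ outside a large ball, and similarly for $v$.

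The main step is to invoke the result of Busca--Sirakov \cite{BuscaSirakov}. It concerns cooperative systems $-\Delta u_i=f_i(u_1,\dots,u_m)$ in $\R^N$, where $\partial f_i/\partial u_j\ge0$ for $i\ne j$ and the linearization at $0$ is negative definite. Under these hypotheses, positive solutions vanishing at infinity are radially symmetric and strictly decreasing about a common point $x_0$. I would check the hypotheses for \eqref{1-1}:
\begin{itemize}
\item the nonlinearities are $f_1=-u+\mu_1u^3+\beta uv^2$ and $f_2=-v+\mu_2v^3+\beta u^2v$;
\item the off-diagonal derivatives $2\beta uv$ are nonnegative on the positive cone because $\beta>0$, so the system is cooperative;
\item the Jacobian at the origin is $-I$, which gives the required nondegeneracy.
\end{itemize}
So there is a single $x_0\in\R^N$ with both $u$ and $v$ radial about $x_0$. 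The essential point is that the centre is common to both components, which is what the cooperative structure provides.

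Next I would translate the solution. Set $\tilde u=u(\cdot+x_0)$ and $\tilde v=v(\cdot+x_0)$. These still solve \eqref{1-1}, since the system is autonomous, and they are positive and lie in $\Hrad(\R^N)$. Theorem~\ref{theorem1-1} then identifies $(\tilde u,\tilde v)$ with the pair \eqref{1-4}, and translating back gives \eqref{1-6}.

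The only real obstacle is confirming that the hypotheses of \cite{BuscaSirakov} hold exactly as stated there, in particular the decay and the nondegeneracy at zero. The exponential decay from the first step should cover these. If their theorem were instead stated for solutions that merely vanish at infinity, the bounded continuous regularity established above suffices.
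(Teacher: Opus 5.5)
Your proposal is correct and follows the paper's route. The paper derives the corollary from the Busca--Sirakov moving-plane theorem \cite[Theorem~2]{BuscaSirakov}, which gives radial symmetry about a common centre, then translates and applies Theorem~\ref{theorem1-1}; your regularity and decay step fills in details the paper omits. One correction to your hypothesis check: the common centre comes from the system being fully coupled along the solution, i.e.\ $\partial f_1/\partial v=\partial f_2/\partial u=2\beta uv>0$ (this is where $\beta>0$ and $u,v>0$ are used), not from cooperativity alone, since for $\beta=0$ the system is still cooperative but the two centres can differ.
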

The conclusion concerns all positive $H^1$ solutions, not merely ground states or solutions constructed by a variational construction. This distinction is essential, since uniqueness of a ground state does not exclude additional positive solutions at higher energy. Combined with the strong-coupling theorem of Wei and Yao \cite{WeiYao}, Corollary~\ref{cor1-3} yields equal-frequency uniqueness for every positive coupling outside $[\mu _1,\mu _2]$ in dimensions two and three. 
While for $\beta\in\left[\mu_1,\mu_2\right]$ (here we have assumed that $\mu_1<\mu_2$), \cite{bwwei} proved that there is no positive solution; for $N=3$,
compare also \cite[Remark~2.6]{BartschZhongZou}. If $\mu _1=\mu _2=\beta=\mu$, uniqueness fails, since the one-parameter family
\[  \mu^{-\frac12}(\cos\vartheta\,w,\sin\vartheta\,w),
 \qquad 0<\vartheta<\frac{\pi}{2},
\]
consists of positive solutions. Finally, when $\beta=0$, the system decouples and the two scalar components may be translated independently,
so uniqueness up to simultaneous translation also fails. 

We now describe the main ideas of the proof.  The main difficulty is to
establish synchronization in the radial class.  The direct comparison
identity for the two equations has no fixed sign, and an ordinary scalar
Pohozaev identity does not control the relative size of the components.  Our
argument combines a positive system Pohozaev functional with two
differential identities for the component ratio.

The first step is a normalization adapted to the canonical synchronized
state.  With the constants in \eqref{3-1}, write
$u=\lambda _1y_1$ and $v=\lambda _2y_2$.  The two components then satisfy
\[
 -\Delta y_i+\mathcal V y_i=\delta y_i^3,
 \qquad
 \mathcal V=1-\varepsilon P,
 \qquad
 P=\theta _1y_1^2+\theta _2y_2^2,
 \qquad i\in\{1,2\}.
\]
Here $0<\delta<1$, $\varepsilon=1-\delta>0$, and
$\theta _1+\theta _2=1$. Thus, the two components have the same
self-consistent potential and differ only through their cubic terms.

The second step is a weighted Pohozaev identity adapted to this simultaneous potential.  Recall that generalized radial Pohozaev identities for scalar equations were developed by Kawano--Ni--Yotsutani \cite{KawanoNiYotsutani} and Shioji--Watanabe \cite{ShiojiWatanabe}. Here we need to deal with a system that is much more complicated. 

Set $m=N-1$ and choose the weights $a,b,c$ as in \eqref{4-1}.  Our first key observation is that the functions $J_i$ defined in \eqref{4-2} satisfy
\[  J_i'=G y_i^2 \]
with the same function $G$ for both components.  The weighted sum and its correction,
\[  \mathscr J=\theta _1J_1+\theta _2J_2,  \qquad  \mathscr K=\mathscr J-\frac{\varepsilon a}{4}P^2, \]
satisfy the identity
\[  \mathscr K'(r)=\frac{ma(r)P(r)}{3r}\left(\frac{(3-m)(2m-3)}{9r^2}-1\right). \]
The correction cancels the terms containing the derivative of the self-consistent potential.  Combining this identity with the asymptotic
expansions at the origin and the exponential decay at infinity gives 
\[   \mathscr K(r)>0, \qquad \mathscr J(r)>0  \quad\text{for every }r>0. \]
We then use $m\in\{1,2\}$, equivalently
$N\in\{2,3\}$ to get suitable sign for involved functions.

The construction has a feature that may be useful beyond the present system. Its key ingredient is the cancellation principle: once the
equations are written with a common self-consistent potential, the weights and correction are chosen so that derivatives of that potential cancel, leaving a one-dimensional expression whose sign can be tested. This suggests a possible strategy for other cooperative systems admitting a comparable common-potential reduction. The exact cancellation used here depends on the two-component cubic structure and the common linear frequency, while the positivity argument uses $m=N-1\in\{1,2\}$.  Extensions beyond this setting would therefore require a new common-potential reduction and a separate sign analysis. This should be another very interesting problem.

It remains to convert this positivity into synchronisation.  Suppose first that $Y_2:=y_2(0)>Y_1:=y_1(0)$, and set
$\eta=y_2/y_1$.  The normalised equations give 
\[
 \bigl(r^my_1^2\eta'\bigr)'
 =-\delta r^my_1^4\eta(\eta^2-1),
\]
so $\eta$ initially decreases.  On the other hand, the auxiliary quotient
$Z$ introduced in \eqref{5-6} satisfies
\[ Z'=\frac{2\eta\eta'}{(\theta _1+\theta _2\eta^2)^2}\,\mathscr J.
\]
Since $\mathscr J>0$, the signs of $Z'$ and $\eta'$ agree.  If $\eta'$ had a
first zero, the flux identity would force  $\eta$ to be less than one there.  The exact formula for $Z$ would then give $Z>0$, whereas
 $\lim_{r\to 0}Z(r)=0$ and $Z'<0$ before the first zero would give
 $Z<0$.  If no such zero existed, $Z$ would decrease strictly from zero, contradicting the fact that $\lim_{r\to \infty}Z(r)=0$.  Hence $Y_2>Y_1$ is impossible; interchanging the
components also excludes $Y_1>Y_2$.  Equality of the central values then
implies $y_1\equiv y_2$.

After synchronisation, the common profile solves
$-\Delta w+w=w^3$.  The uniqueness of this solution then follows from
Kwong's theorem \cite[Theorem, pp.~265--266]{Kwong}. Returning to the original variables yields \eqref{1-4}. By the classical moving-plane result of Busca and Sirakov \cite[Section~2.1, Theorem~2]{BuscaSirakov}, the conclusion of Theorem~\ref{theorem1-1} extends to any positive  solution of \eqref{1-1}. We may therefore restrict attention to radially symmetric positive solutions throughout the paper.

The paper is organized as follows. Section~\ref{section3} introduces the common-potential formulation and derives the asymptotic estimates needed later. Section~\ref{section4} constructs the weighted system Pohozaev functional and establishes its positivity. Section~\ref{section5} combines this positivity with the component-ratio identities to prove synchronisation. Finally, Section~\ref{section6} contains the proofs of Theorem~\ref{theorem1-1} and Corollary~\ref{cor1-3}.

\section{The common-potential normalisation}\label{section3}

In this section, we transform the original system into a pair of new equations which have the same potentials, which enables us to compare the two solutions using ODE analysis.

To begin with, let us introduce some notation and define
\begin{equation}\label{3-1}
 \begin{gathered}
  D:=\mu _1\mu _2-\beta^2,
  \qquad
  \lambda _1:=\sqrt{\frac{\mu _2-\beta}{D}},
  \qquad
  \lambda _2:=\sqrt{\frac{\mu _1-\beta}{D}},\\
  \delta:=\frac{(\mu _1-\beta)(\mu _2-\beta)}{D},
  \qquad
  \varepsilon:=\frac{\beta(\mu _1+\mu _2-2\beta)}{D},\\
  \theta _1:=\frac{\mu _2-\beta}{\mu _1+\mu _2-2\beta},
  \qquad
  \theta _2:=\frac{\mu _1-\beta}{\mu _1+\mu _2-2\beta}.
 \end{gathered}
\end{equation}

\begin{lemma}\label{lemma3-1}
Let $N\in\{2,3\}$ and $0<\beta<\min\{\mu _1,\mu _2\}$.  Then all quantities in
\eqref{3-1} are well-defined and satisfy
\begin{equation}\label{3-2}
 D>0,
 \quad 0<\delta<1,
 \quad \varepsilon=1-\delta>0,
 \quad \theta _1,\theta _2>0,
 \quad \theta _1+\theta _2=1.
\end{equation}
Moreover,
\begin{equation}\label{3-3}
 \begin{gathered}
  \varepsilon\theta _1=\beta\lambda _1^2,
  \qquad
  \varepsilon\theta _2=\beta\lambda _2^2,\\
  \delta+\varepsilon\theta _1=\mu _1\lambda _1^2,
  \qquad
  \delta+\varepsilon\theta _2=\mu _2\lambda _2^2.
 \end{gathered}
\end{equation}
For any $(u,v)\in H^1(\R^N)\times H^1(\R^N)$, define
\begin{equation}\label{3-4}
 \begin{gathered}
  y_1(x):=\frac{u(x)}{\lambda _1},
  \qquad
  y_2(x):=\frac{v(x)}{\lambda _2},\\
  P(x):=\theta _1y_1(x)^2+\theta _2y_2(x)^2,
  \qquad
  \mathcal V(x):=1-\varepsilon P(x).
 \end{gathered}
\end{equation}
Then $(u,v)$ is a solution of \eqref{1-1} if and only if
$(y_1,y_2)\in H^1(\R^N)\times H^1(\R^N)$ is a solution of
\begin{equation}\label{3-5}
 -\Delta y_i+\mathcal V y_i=\delta y_i^3,
 \qquad x\in\R^N,\quad i\in\{1,2\}.
\end{equation}
\end{lemma}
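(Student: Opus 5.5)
The proof is a direct algebraic verification, carried out in three steps.

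\emph{Step 1: positivity and normalisation.} Since $0<\beta<\mu_1\le\mu_2$ (or more generally $\beta<\min\{\mu_1,\mu_2\}$), we have $\mu_1\mu_2>\beta^2$, so $D>0$. Together with $\mu_i-\beta>0$, this makes $\lambda_1,\lambda_2$ well defined and positive, and it gives $\theta_1,\theta_2>0$. The identity $\theta_1+\theta_2=1$ is immediate from the definitions. Positivity of $\varepsilon$ is clear since $\beta>0$ and $\mu_1+\mu_2-2\beta>0$. Then $\delta>0$ is clear, and $\delta<1$ follows once we check $\varepsilon=1-\delta$, i.e.\ $\delta+\varepsilon=1$. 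This amounts to
\[
(\mu_1-\beta)(\mu_2-\beta)+\beta(\mu_1+\mu_2-2\beta)=\mu_1\mu_2-\beta^2=D ,
\]
which holds by expanding the left-hand side.

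\emph{Step 2: the identities \eqref{3-3}.} For the first identity,
\[
\varepsilon\theta_1=\frac{\beta(\mu_1+\mu_2-2\beta)}{D}\cdot\frac{\mu_2-\beta}{\mu_1+\mu_2-2\beta}=\frac{\beta(\mu_2-\beta)}{D}=\beta\lambda_1^2 ,
\]
and symmetrically $\varepsilon\theta_2=\beta\lambda_2^2$. For the third identity we compute
\[
\delta+\varepsilon\theta_1=\frac{(\mu_2-\beta)(\mu_1-\beta)+\beta(\mu_2-\beta)}{D}=\frac{(\mu_2-\beta)\mu_1}{D}=\mu_1\lambda_1^2 ,
\]
and the fourth follows by the symmetric computation.

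\emph{Step 3: the equivalence.} Substitute $u=\lambda_1y_1$ and $v=\lambda_2y_2$ into the first equation of \eqref{1-1} and divide by $\lambda_1>0$. This gives
\[
-\Delta y_1+y_1=\mu_1\lambda_1^2y_1^3+\beta\lambda_2^2y_1y_2^2 .
\]
Using \eqref{3-3}, the right-hand side equals
\[
(\delta+\varepsilon\theta_1)y_1^3+\varepsilon\theta_2y_1y_2^2=\delta y_1^3+\varepsilon P\,y_1 .
\]
Moving $\varepsilon P y_1$ to the left yields $-\Delta y_1+\mathcal V y_1=\delta y_1^3$. The second equation transforms in the same way into the equation for $y_2$.

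Every step is reversible, since the $\lambda_i$ are nonzero constants. Scaling by these constants preserves membership in $H^1$, and the notion of (weak or classical) solution is unchanged. This gives the "if and only if" statement.

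There is no genuine obstacle here. The only point needing care is the bookkeeping in $\delta+\varepsilon=1$ and in the combinations appearing in \eqref{3-3}. Once these identities are verified, the system reduces to \eqref{3-5}, in which the two equations share the common potential $\mathcal V$.
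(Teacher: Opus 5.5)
Your proposal is correct and matches the paper's proof essentially line by line. It uses the same expansion showing $\delta+\varepsilon=1$ (equivalently $D-(\mu_1-\beta)(\mu_2-\beta)=\beta(\mu_1+\mu_2-2\beta)>0$), the same two computations for $\varepsilon\theta_i=\beta\lambda_i^2$ and $\delta+\varepsilon\theta_i=\mu_i\lambda_i^2$, and the same substitution $u=\lambda_1y_1$, $v=\lambda_2y_2$ with division by $\lambda_i>0$ and reversal of the calculation.
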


\begin{proof}
Since $\mu _1>\beta$ and $\mu _2>\beta$, one has
$D=\mu _1\mu _2-\beta^2>0$. It follows that $\delta>0$, and
\[
 D-(\mu _1-\beta)(\mu _2-\beta)
 =\beta(\mu _1+\mu _2-2\beta)>0.
\]
This proves $0<\delta<1$ and the asserted formula for $\varepsilon$.
The conclusions about the $\theta_i$ follow from their definitions.
For example,
\[
 \varepsilon\theta _1
 =\frac{\beta(\mu _1+\mu _2-2\beta)}D
   \frac{\mu _2-\beta}{\mu _1+\mu _2-2\beta}
 =\beta\lambda _1^2,
\]
and
\[
 \delta+\varepsilon\theta _1
 =\frac{(\mu _1-\beta)(\mu _2-\beta)+\beta(\mu _2-\beta)}D
 =\frac{\mu _1(\mu _2-\beta)}D
 =\mu _1\lambda _1^2.
\]
The two identities with index $2$ follow by the same calculation.

Substitution of $u=\lambda _1y_1$ and $v=\lambda _2y_2$ into the first
equation of \eqref{1-1}, followed by division by $\lambda _1$, gives
\[
 -\Delta y_1+y_1
 =\mu _1\lambda _1^2y_1^3+\beta\lambda _2^2y_1y_2^2.
\]
By \eqref{3-3}, the right-hand side equals
\[
 (\delta+\varepsilon\theta _1)y_1^3
 +\varepsilon\theta _2y_1y_2^2
 =\delta y_1^3+\varepsilon P y_1.
\]
Moving the last term to the left gives \eqref{3-5} for
$i=1$.  The second original equation similarly becomes
\[
 -\Delta y_2+y_2
 =\varepsilon\theta _1y_1^2y_2
  +(\delta+\varepsilon\theta _2)y_2^3
 =\varepsilon P y_2+\delta y_2^3.
\]
Since $\lambda _1,\lambda _2>0$, the converse follows by reversing the
calculation.
\end{proof}

Let $m:=N-1\in\{1,2\}$.  The radial form of
\eqref{3-5} is
\begin{equation}\label{3-6}
 y_i''+\frac mr y_i'-\mathcal V y_i+\delta y_i^3=0,
 \qquad r>0,\quad i\in\{1,2\}.
\end{equation}
In what follows, dependence on $r$ is omitted when no confusion can arise.
Unless stated otherwise, all $O(\cdot)$ estimates are taken as $r\to 0$
and are uniform for $i\in\{1,2\}$.

\begin{lemma}
\label{lemma3-2}
Let $N\in\{2,3\}$ and $0<\beta<\min\{\mu _1,\mu _2\}$, and define
$\delta,\varepsilon,\theta _1,\theta _2$ by
\eqref{3-1}.  Suppose that
$(y_1,y_2)\in\Hrad(\R^N)\times\Hrad(\R^N)$ is a positive solution of
\eqref{3-5}, where
$P=\theta _1y_1^2+\theta _2y_2^2$ and $\mathcal V=1-\varepsilon P$.
Then $y_i\in C^\infty(\R^N)$.  Let $Y_i:=y_i(0)>0$.  The following expansions hold as $r\to 0$:
\begin{equation}\label{3-7}
\begin{aligned}
& y_i(r)=Y_i+\frac{\mathcal V(0)Y_i-\delta Y_i^3}{2N}r^2+O(r^4),\\
& y_i'(r)=\frac{\mathcal V(0)Y_i-\delta Y_i^3}{N}r+O(r^3).
 \end{aligned}
\end{equation}
 Moreover, there exist constants $C,\kappa>0$, depending on
the parameters and the solution, such that
\begin{equation}\label{3-8}
 |y_i(r)|+|y_i'(r)|\leq Ce^{-\kappa r}\qquad \mbox{for}~
r\geq1,\quad i\in\{1,2\}.
\end{equation}
\end{lemma}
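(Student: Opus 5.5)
The plan has three parts: regularity, the expansion at the origin, and exponential decay at infinity. All three are standard consequences of the fact that \eqref{3-5} is a subcritical semilinear system with cubic nonlinearity in dimension $N\le 3$.

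\emph{Regularity.} Since $y_i\in H^1(\R^N)$ and $N\le3$, Sobolev embedding gives $y_i\in L^p$ for $2\le p\le 6$. The right-hand side $f_i:=\delta y_i^3-\mathcal V y_i+y_i=\delta y_i^3+\varepsilon Py_i$ is a cubic polynomial in $(y_1,y_2)$, so it lies in $L^2$. Hence $(-\Delta+1)y_i=f_i$ gives $y_i\in H^2\subset L^\infty\cap C^{0,\alpha}$. I would then bootstrap with Schauder estimates: if $y_i\in C^{k,\alpha}$, then $f_i\in C^{k,\alpha}$, so $y_i\in C^{k+2,\alpha}$. This yields $y_i\in C^\infty(\R^N)$. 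A smooth radial function $y(x)=g(|x|)$ has a profile $g$ that extends to an even smooth function of $r$. Equivalently, $y_i'(0)=0$ and the odd Taylor coefficients vanish.

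\emph{Expansion at $0$.} Because $y_i$ is smooth and even in $r$, we have $y_i(r)=Y_i+c_ir^2+O(r^4)$ and $y_i'(r)=2c_ir+O(r^3)$. Substituting into \eqref{3-6} gives $y_i''+\frac mr y_i'=2c_i(1+m)+O(r^2)=2Nc_i+O(r^2)$. The zero-order term is $\mathcal V(0)Y_i-\delta Y_i^3+O(r^2)$, since $P$ is smooth and even as well. Matching constants yields $c_i=(\mathcal V(0)Y_i-\delta Y_i^3)/(2N)$, which is \eqref{3-7}. Alternatively, one can integrate $(r^my_i')'=r^m(\mathcal Vy_i-\delta y_i^3)$ from $0$, using $r^my_i'\to0$; this gives the derivative formula directly.

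\emph{Exponential decay.} For radial $H^1$ functions, Strauss's lemma gives $|y_i(r)|\le Cr^{-(N-1)/2}\|y_i\|_{H^1}\to0$. Hence $P(r)\to0$, and $\mathcal V-\delta y_i^2\ge \tfrac12$ for $r\ge R_0$. On $r\ge R_0$, $y_i$ is then a positive subsolution, $\Delta y_i\ge \tfrac12 y_i$. I would compare it with the supersolution $Ae^{-r/\sqrt2}$ or $Ae^{-\kappa r}$ with $\kappa<1/\sqrt2$; the drift term $\frac mr$ has the favourable sign for $\kappa r$ large. Choosing $A$ to dominate at $R_0$ and applying the maximum principle on annuli $R_0<r<R$ with $R\to\infty$ (using $y_i\to0$) gives $y_i\le Ce^{-\kappa r}$.

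For $y_i'$, I would integrate $(r^my_i')'=r^m(\mathcal V y_i-\delta y_i^3)$ from $r$ to $\infty$. First one needs $r^my_i'\to0$. This holds because the right-hand side is integrable, so $r^my_i'$ has a limit $L$; if $L\ne0$, then $y_i'\sim Lr^{-m}$, which contradicts $y_i\to0$ together with $y_i\in L^2$. The integration then gives $|y_i'(r)|\le r^{-m}\int_r^\infty s^m Ce^{-\kappa s}\,ds\le C'e^{-\kappa' r}$. Finally, on the compact range $1\le r\le R_0$, boundedness follows from smoothness. I expect this decay step, specifically justifying the boundary behaviour at infinity for the comparison and for the flux $r^my_i'$, to be the only slightly delicate point; everything else is routine.
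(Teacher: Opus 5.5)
Your proposal is correct. The regularity step and the expansion at the origin coincide in substance with the paper's: matching Taylor coefficients in \eqref{3-6} is equivalent to the paper's evaluation of \eqref{3-5} at $0$ via $\Delta y_i(0)=Ny_i''(0)$.

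The decay argument is where you take a genuinely different route. You work on $\{r\ge R_0\}$, where $\mathcal V-\delta y_i^2\ge\frac12$, and compare $y_i$ with the explicit supersolution $Ae^{-\kappa r}$ through the maximum principle on annuli. (Incidentally, $\Delta(Ae^{-\kappa r})=(\kappa^2-\kappa m/r)Ae^{-\kappa r}\le\kappa^2Ae^{-\kappa r}$ for every $r>0$, so the drift term is favourable everywhere, not just for large $\kappa r$.) You then recover $y_i'$ from the flux $r^my_i'$. This costs an extra step showing that the flux vanishes at infinity, although once $y_i\le Ce^{-\kappa r}$ is known, a nonzero limit $L$ is excluded immediately.

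The paper instead stays purely one-dimensional:
\begin{itemize}
\item The substitution $z_i=r^{m/2}y_i$ removes the first-order term and gives $z_i''=Q_iz_i$ with $Q_i\to1$.
\item The fact $z_i\in L^2(R,\infty)$ is just $y_i\in L^2(\R^N)$ rewritten, so convexity forces $z_i'<0$ and $z_i,z_i'\to0$.
\item The energy $(z_i')^2-q_0z_i^2$ is nonincreasing with limit zero, which yields $-z_i'\ge\sqrt{q_0}\,z_i$.
\item Integrating $z_i''=Q_iz_i$ over $(r,\infty)$ then gives the derivative bound with no separate flux argument.
\end{itemize}

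So the paper's route handles the behaviour at infinity, which you identified as the delicate point, by a single convexity and energy argument, and it uses no maximum principle. Yours is the more standard PDE comparison, and it would apply verbatim to nonradial solutions once $y_i\to0$ at infinity is known. Since the lemma only needs some $\kappa>0$, either approach suffices.
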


\begin{proof}
By Lemma~\ref{lemma3-1}, the functions
$u:=\lambda _1y_1$ and $v:=\lambda _2y_2$ form a positive solution of
\eqref{1-1}. Classical elliptic regularity theory then yields that each
$y_i$ is smooth and strictly positive in $\R^N$. With $e_1:=(1,0,\ldots,0)\in\R^N$, the
one-variable function $t\mapsto y_i(te_1)$ is smooth and even, so its first 
and third derivatives vanish at zero.  Furthermore,
$\Delta y_i(0)=Ny_i''(0)$, where the double prime denotes the derivative of
the radial profile.  Evaluating \eqref{3-5} at the origin gives
\[
 y_i''(0)=\frac{\mathcal V(0)Y_i-\delta Y_i^3}{N}.
\]
Consequently, we get \eqref{3-7}.

By Strauss's radial lemma~\cite[Radial Lemma~1]{Strauss}, we have $y_i(r)\to0$ as $r\to\infty$.
Hence
\[
 q_i(r):=\mathcal V(r)-\delta y_i(r)^2
 =1-\varepsilon P(r)-\delta y_i(r)^2\to1\qquad \mbox{as}~~r\to\infty.
\]
Define $z_i(r):=r^{\frac{m}{2}}y_i(r)$.  Differentiating
$y_i(r)=r^{-\frac{m}{2}}z_i(r)$ and using \eqref{3-6} yields
\begin{equation}\label{3-9}
 z_i''(r)=Q_i(r)z_i(r),
 \qquad
 Q_i(r):=q_i(r)-\frac{m(2-m)}{4r^2}.
\end{equation}
Since $Q_i(r)\to1$ as $r\to\infty$, there are $R\geq1$ and constants
$0<q_0\leq q_1<\infty$ such that
\begin{equation}\label{3-10}
 0<q_0\leq Q_i(r)\leq q_1\quad\mbox{for}~~
r\geq R,\ i\in\{1,2\}.
\end{equation}
Moreover, $z_i(r)>0$ for $r>0$.  Since $y_i\in L^2(\R^N)$ and it is radial, we have
\begin{equation}\label{3-11}
 \begin{aligned}
  \int_R^\infty |z_i(r)|^2\dd r
  &=\int_R^\infty |y_i(r)|^2r^{N-1}\dd r\leq\frac{1}{|\mathbb S^{N-1}|}
    \norm{y_i}{L^2(\R^N)}^2<\infty.
 \end{aligned}
\end{equation}
Thus
$y_i\in L^2((0,\infty),r^{N-1}\,dr)$ and
$z_i\in L^2((R,\infty),\,dr)$.

In $[R,\infty)$, $z_i''=Q_i z_i>0$, so $z_i'$ is strictly increasing. If $z_i'(r_0)\geq0$ at some point, then $z_i$ is nondecreasing on
$[r_0,\infty)$ and cannot belong to $L^2(R,\infty)$ since $z_i(r_0)>0$.  Hence $z_i'<0$. Thus $z_i(r)\to L_i$ for some
$L_i\geq0$; if $L_i>0$, then $z_i\notin L^2(R,\infty)$, so $L_i=0$. Also $\ell_i:=\lim_{r\to\infty}z_i'(r)\leq0$ exists; if $\ell_i<0$,
then $z_i$ eventually becomes negative. As a result, $\ell_i=0$, and 
\begin{equation}\label{3-12}
 z_i(r)\to0,
 \qquad z_i'(r)\to0
 \quad\text{as }r\to\infty.
\end{equation}

Set $E_i(r):=z_i'(r)^2-q_0z_i(r)^2$.  From \eqref{3-9} and
\eqref{3-10}, we have
\[
 E_i'(r)=2z_i'(r)\bigl(Q_i(r)-q_0\bigr)z_i(r)\leq0.
\]
Since $E_i'(r)\leq0$ and $E_i(r)\to0$ as $r\to\infty$, we deduce that
$E_i(r)\geq0$ for $r\geq R$. It follows from  $z_i'(r)<0$ that
$ -z_i'(r)\geq\sqrt{q_0}\,z_i(r)$. Integration yields
\[
 z_i(r)\leq z_i(R)e^{-\sqrt{q_0}(r-R)}.
\]
Integrating $z_i''(s)=Q_i(s)z_i(s)$ over $(r,T)$ and letting $T\to\infty$
gives
\[
 -z_i'(r)=\int_r^\infty Q_i(s)z_i(s)\dd s
 \leq q_1\int_r^\infty z_i(s)\dd s
 \leq C e^{-\sqrt{q_0}r}.
\]
Since $y_i(r)=r^{-\frac{m}{2}}z_i(r)$ and
$y_i'(r)=r^{-\frac{m}{2}}z_i'(r)-\frac{m}{2}r^{-\frac{m}{2}-1}z_i(r)$, choosing
$0<\kappa<\sqrt{q_0}$ and absorbing the polynomial factors into the constant
gives \eqref{3-8} on $[R,\infty)$.  Increasing $C$ if necessary, we are able to extend the estimate
to $[1,\infty)$.
\end{proof}

\section{New Pohozaev type functions and their properties}\label{section4}

For $m\in\{1,2\}$ and $r>0$, define
\begin{equation}\label{4-1}
 a(r):=r^{\frac{4m}{3}},
 \qquad
 b(r):=\frac{m}{3r}a(r),
 \qquad
 c(r):=\frac{m(3-m)}{9r^2}a(r).
\end{equation}
For $i\in\{1,2\}$, we set
\begin{equation}\label{4-2}
 \begin{split}
 J_i(r):={}&\frac{a(r)}2y_i'(r)^2+b(r)y_i(r)y_i'(r)
      +\frac{c(r)}2y_i(r)^2\\
 &-\frac{a(r)}2\mathcal V(r)y_i(r)^2
      +\frac{a(r)\delta}{4}y_i(r)^4.
 \end{split}
\end{equation}
This functional is obtained from a weighted Pohozaev ansatz.  The
functions $a,b,c$ are chosen, uniquely up to a common multiplicative
constant, so that differentiation along \eqref{3-6} cancels
the terms involving $(y_i')^2$, $y_i y_i'$, and $y_i^4$.  Thus
$J_i'=G y_i^2$, with the same function $G$ for both components.
\begin{lemma}\label{lemma4-1}
Let $N\in\{2,3\}$, $m:=N-1$, $\delta\in\R$, and
$\mathcal V\in C^1((0,\infty))$.  Suppose that
$y_i\in C^2((0,\infty))$ satisfies \eqref{3-6} for some
$i\in\{1,2\}$, and define $a,b,c$ and $J_i$ by \eqref{4-1} and
\eqref{4-2}.  Then $J_i\in C^1((0,\infty))$ and
\begin{equation}\label{4-3}
 J_i'(r)=G(r)y_i(r)^2,
\end{equation}
where
\begin{equation}\label{4-4}
 \frac{G(r)}{a(r)}
 =-\frac12\mathcal V'(r)
  -\frac{m}{3r}\mathcal V(r)
  +\frac{m(3-m)(2m-3)}{27r^3}.
\end{equation}
\end{lemma}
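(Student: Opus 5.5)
The plan is to verify \eqref{4-3} by a direct computation: differentiate $J_i$ term by term, eliminate $y_i''$ using \eqref{3-6}, and then collect the result according to the monomials $(y_i')^2$, $y_iy_i'$, $y_i^4$, $y_i^3y_i'$ and $y_i^2$. Regularity is immediate. Since $a,b,c$ are smooth on $(0,\infty)$, $\mathcal V\in C^1$ and $y_i\in C^2$, every term of \eqref{4-2} is $C^1$, so $J_i\in C^1((0,\infty))$.

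The only input needed for the weights is their derivatives, which follow from \eqref{4-1}:
\[
 a'=\frac{4m}{3r}a,
 \qquad
 b'=\frac{m}{3r^2}\Bigl(\frac{4m}{3}-1\Bigr)a,
 \qquad
 c'=\frac{m(3-m)}{9r^3}\Bigl(\frac{4m}{3}-2\Bigr)a .
\]
Differentiating \eqref{4-2} produces $a y_i'y_i''$ and $b y_iy_i''$. Substituting $y_i''=-\frac mr y_i'+\mathcal V y_i-\delta y_i^3$ into these two terms gives:
\begin{itemize}
\item $a y_i'y_i''=-\frac{am}{r}(y_i')^2+a\mathcal V y_iy_i'-a\delta y_i^3y_i'$. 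Its last two pieces cancel exactly against $-a\mathcal V y_iy_i'$ and $+a\delta y_i^3y_i'$, which come from differentiating the terms $-\frac a2\mathcal V y_i^2$ and $\frac{a\delta}4y_i^4$.
\item $b y_iy_i''=-\frac{bm}{r}y_iy_i'+b\mathcal V y_i^2-b\delta y_i^4$.
\end{itemize}

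It then remains to check that the coefficients of the non-$y_i^2$ monomials vanish, using the derivatives above:
\begin{itemize}
\item The coefficient of $(y_i')^2$ is $\frac{a'}2-\frac{am}r+b=\frac ar\bigl(\frac{2m}3-m+\frac m3\bigr)=0$.
\item The coefficient of $y_iy_i'$ is $b'-\frac{bm}r+c=\frac{ma}{r^2}\cdot\frac{(4m-3)-3m+(3-m)}{9}=0$.
\item The coefficient of $y_i^4$ is $\delta\bigl(\frac{a'}4-b\bigr)=\delta a\bigl(\frac m{3r}-\frac m{3r}\bigr)=0$.
\end{itemize}

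What survives is $y_i^2$ multiplied by
\[
 \frac{c'}2-\frac{a'}2\mathcal V-\frac a2\mathcal V'+b\mathcal V .
\]
Since $-\frac{a'}2\mathcal V+b\mathcal V=-\frac{m}{3r}a\mathcal V$ and $\frac{c'}2=\frac{m(3-m)(2m-3)}{27r^3}a$, this coefficient equals the $G$ of \eqref{4-4}. The coefficient does not involve the index $i$, which is why $G$ is common to both components.

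There is no real obstacle here. The only points needing care are the bookkeeping of the cross terms $a\mathcal V y_iy_i'$ and $a\delta y_i^3y_i'$, and computing $c'$ correctly from the exponent $\frac{4m}3-2$. Note that the computation uses neither the specific form $\mathcal V=1-\varepsilon P$ nor the sign of $\delta$, consistent with the hypotheses of Lemma~\ref{lemma4-1}.
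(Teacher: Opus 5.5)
Your proposal is correct and follows the paper's own proof: you differentiate $J_i$, eliminate $y_i''$ via \eqref{3-6}, observe that the $\mathcal V y_iy_i'$ and $y_i^3y_i'$ terms cancel, check that the coefficients of $(y_i')^2$, $y_iy_i'$ and $y_i^4$ vanish, and identify the surviving $y_i^2$ coefficient as $G$. Your derivative formulas for $a,b,c$ and each coefficient check agree with the paper's computation leading to \eqref{4-5}.
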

\begin{proof}
For brevity, write $y:=y_i$. From \eqref{3-6} we have
\[
 y''=-\frac{m}{r}y'+\mathcal V y-\delta y^3.
\]
Differentiating \eqref{4-2}, we obtain
\begin{align*}
 J_i'
 ={}&\frac{a'}2(y')^2+ay'y''
     +b'yy'+b(y')^2+byy''\\
 &+\frac{c'}2y^2+cyy'
     -\frac{(a\mathcal V)'}2y^2-a\mathcal V yy'+\frac{\delta a'}4y^4+a\delta y^3y'.
\end{align*}
Substituting the expression for $y''$ from
\eqref{3-6}, we obtain 
\[
 ay'y''
 =-\frac{ma}{r}(y')^2+a\mathcal V yy'-a\delta y^3y',
\]
and
\[
 byy''
 =-\frac{mb}{r}yy'+b\mathcal V y^2-b\delta y^4.
\]
Substituting these identities and collecting like terms gives
\begin{equation}\label{4-5}
 \begin{aligned}
 J_i'
 ={}&
 \left(\frac{a'}2-\frac{ma}{r}+b\right)(y')^2
 +\left(b'-\frac{mb}{r}+c\right)yy'\\
 &+\left(
      b\mathcal V+\frac{c'}2
      -\frac{(a\mathcal V)'}2
   \right)y^2
 +\delta\left(\frac{a'}4-b\right)y^4.
 \end{aligned}
\end{equation}

Since $a'=\frac{4ma}{3r}$ and $b=\frac{ma}{3r}$, 
we obtain
\[
 \frac{a'}2-\frac{ma}{r}+b
 =\frac{2ma}{3r}-\frac{ma}{r}+\frac{ma}{3r}=0
\]
and
\[
 \frac{a'}4-b
 =\frac{ma}{3r}-\frac{ma}{3r}=0.
\]
Moreover,
\[
 b'
 =\frac{m}{3}\left(\frac{a'}r-\frac{a}{r^2}\right)
 =\frac{m(4m-3)}{9r^2}a,
\]
which implies
\begin{align*}
 b'-\frac{mb}{r}+c
 &=
 \frac{m(4m-3)}{9r^2}a
 -\frac{m^2}{3r^2}a
 +\frac{m(3-m)}{9r^2}a=0.
\end{align*}
Therefore, only the terms involving $y^2$ are left.  Using
\[
 b-\frac{a'}2=-\frac{ma}{3r},
 \qquad
 \frac{c'}2
 =\frac{m(3-m)(2m-3)}{27r^3}a,
\]
we deduce
\begin{align*}
 b\mathcal V+\frac{c'}2-\frac{(a\mathcal V)'}2
 &=-\frac{a}{2}\mathcal V'
   +\left(b-\frac{a'}2\right)\mathcal V
   +\frac{c'}2\\
 &=a\left[
   -\frac12\mathcal V'
   -\frac{m}{3r}\mathcal V
   +\frac{m(3-m)(2m-3)}{27r^3}
   \right]=G.
\end{align*}
Consequently, \eqref{4-5} reduces to $J_i'=Gy_i^2$, 
which proves \eqref{4-3}.
\end{proof}

Define the functions
\begin{equation}\label{4-6}
 \mathscr J(r):=\theta_1J_1(r)+\theta_2J_2(r),
 \qquad
 \mathscr K(r):=\mathscr J(r)-\frac{\varepsilon a(r)}{4}P(r)^2.
\end{equation}
Then, we have
\begin{lemma}
\label{lemma4-2}
Let $N\in\{2,3\}$ and $0<\beta<\min\{\mu _1,\mu _2\}$.  Suppose that
$(y_1,y_2)\in\Hrad(\R^N)\times\Hrad(\R^N)$ is a positive solution of
\eqref{3-5}.  Set $m:=N-1$, with all coefficients and
functionals defined by
\eqref{3-1}, \eqref{3-4}, \eqref{4-1},
\eqref{4-2}, and \eqref{4-6}.  Then
$\mathscr K\in C^1((0,\infty))$ and, for every $r>0$,
\begin{equation}\label{4-7}
 \mathscr K'(r)
 =\frac{ma(r)P(r)}{3r}
  \left(\frac{(3-m)(2m-3)}{9r^2}-1\right).
\end{equation}
\end{lemma}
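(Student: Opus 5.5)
The plan is a direct computation: apply Lemma~\ref{lemma4-1} to each component, sum with the weights $\theta_i$, and check that the correction term cancels the two contributions coming from $\mathcal V$.

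\emph{Regularity.} By Lemma~\ref{lemma3-2}, each $y_i$ is smooth, so $P=\theta_1y_1^2+\theta_2y_2^2$ and $\mathcal V=1-\varepsilon P$ lie in $C^1((0,\infty))$. Each $y_i$ solves \eqref{3-6}, so Lemma~\ref{lemma4-1} applies to $i=1$ and $i=2$ with the same $\mathcal V$. This gives $J_i\in C^1$ and $J_i'=Gy_i^2$ with the same $G$ for both components. Since $a$ is smooth on $(0,\infty)$, we also get $\mathscr K\in C^1((0,\infty))$.

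\emph{Derivative of $\mathscr J$.} By linearity,
\[
\mathscr J'=\theta_1Gy_1^2+\theta_2Gy_2^2=GP .
\]
We insert \eqref{4-4} and use $\mathcal V'=-\varepsilon P'$. This gives
\[
\mathscr J'=aP\Bigl(\frac{\varepsilon}{2}P'-\frac{m}{3r}+\frac{m\varepsilon}{3r}P+\frac{m(3-m)(2m-3)}{27r^3}\Bigr).
\]

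\emph{Derivative of the correction.} Using $a'=\frac{4m}{3r}a$, we get
\[
\Bigl(\frac{\varepsilon a}{4}P^2\Bigr)'=\frac{m\varepsilon a}{3r}P^2+\frac{\varepsilon a}{2}PP' .
\]

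\emph{Cancellation.} Subtracting the correction from $\mathscr J'$ removes both the $PP'$ term and the $P^2$ term. What remains is
\[
\mathscr K'=aP\Bigl(-\frac{m}{3r}+\frac{m(3-m)(2m-3)}{27r^3}\Bigr)=\frac{maP}{3r}\Bigl(\frac{(3-m)(2m-3)}{9r^2}-1\Bigr),
\]
which is \eqref{4-7}.

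There is no genuine obstacle here. The only point needing care is the bookkeeping of the two $\varepsilon$-terms, namely the $PP'$ term from $\mathcal V'$ and the $P^2$ term from $\mathcal V$. These are cancelled respectively by the $aPP'$ part and the $a'P^2$ part of the correction's derivative.
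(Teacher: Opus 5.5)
Your proposal is correct and follows the paper's proof essentially line by line. You sum Lemma~\ref{lemma4-1} with weights $\theta_i$ to get $\mathscr J'=GP$, substitute $\mathcal V'=-\varepsilon P'$, and observe that the derivative of $\frac{\varepsilon a}{4}P^2$ cancels exactly the $PP'$ and $P^2$ terms; your explicit justification that $\mathscr K\in C^1((0,\infty))$ is a small point the paper leaves implicit.
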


\begin{proof}
Since $\mathcal V'(r)=-\varepsilon P'(r)$, Lemma~\ref{lemma4-1} gives
\begin{equation}\label{4-8}
 \mathscr J'
 =aP\left[
  \frac{\varepsilon}{2}P'
  -\frac{m}{3r}(1-\varepsilon P)
  +\frac{m(3-m)(2m-3)}{27r^3}
 \right].
\end{equation}
On the other hand, $a'=\frac{4ma}{3r}$, and 
$ \left(\frac{\varepsilon a}{4}P^2\right)'
 =\frac{\varepsilon a}{2}PP'
  +\frac{\varepsilon ma}{3r}P^2$. 
Thus,
\begin{align*}
 \mathscr K'
 &=\mathscr J'
   -\left(\frac{\varepsilon a}{4}P^2\right)'\\
 &=aP\left[
      \frac{\varepsilon}{2}P'
      -\frac{m}{3r}(1-\varepsilon P)
      +\frac{m(3-m)(2m-3)}{27r^3}
     \right]
   -\frac{\varepsilon a}{2}PP'
   -\frac{\varepsilon ma}{3r}P^2\\
 &=-\frac{ma}{3r}P
   +\frac{m(3-m)(2m-3)}{27r^3}aP=\frac{maP}{3r}
   \left(
    \frac{(3-m)(2m-3)}{9r^2}-1
   \right),
\end{align*}
which proves \eqref{4-7}.
\end{proof}

\begin{lemma}
\label{lemma4-3}
Let $N\in\{2,3\}$ and $0<\beta<\min\{\mu _1,\mu _2\}$.  Suppose that
$(y_1,y_2)\in\Hrad(\R^N)\times\Hrad(\R^N)$ is a positive solution of
\eqref{3-5}.  Set $m:=N-1$ and $Y_i:=y_i(0)$, and define
$a,b,c,P,J_i,\mathscr J$, and $\mathscr K$ by \eqref{4-1},
\eqref{3-4}, \eqref{4-2}, and \eqref{4-6}.  Let
\[
 P_0:=\theta _1Y_1^2+\theta _2Y_2^2>0.
\]
Then
\begin{equation}\label{4-9}
 \mathscr K(r)
 =\frac{m(3-m)}{18}\frac{a(r)}{r^2}P_0+O(a(r))
 =\frac{a(r)}{9r^2}P_0+O(a(r))
 \quad\mbox{as}~~r\to 0,
\end{equation}
and
\begin{equation}\label{4-10}
 J_i(r)\to0,
 \qquad a(r)P(r)^2\to0,
 \qquad \mathscr K(r)\to0
 \quad\mbox{as}~~r\to\infty.
\end{equation}
\end{lemma}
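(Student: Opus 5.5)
The plan is to substitute the asymptotic information from Lemma~\ref{lemma3-2} directly into the definitions \eqref{4-2} and \eqref{4-6}, term by term. Near the origin we use the expansions \eqref{3-7}; at infinity we use the exponential decay \eqref{3-8}. No differential identity is needed here; Lemma~\ref{lemma4-2} only enters later, when \eqref{4-9} and \eqref{4-10} serve as boundary data.

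For \eqref{4-9}, fix $i$. By \eqref{3-7} we have $y_i=Y_i+O(r^2)$ and $y_i'=O(r)$. Also $P$, $\mathcal V$ and $y_i$ are bounded near $0$, so $P=P_0+O(r^2)$. We then estimate each term of $J_i$ as $r\to0$:
\begin{itemize}
\item $\frac a2(y_i')^2=O(ar^2)$.
\item $b\,y_iy_i'=\frac{ma}{3r}\,(Y_i+O(r^2))\,O(r)=O(a)$.
\item $-\frac a2\mathcal V y_i^2+\frac{a\delta}{4}y_i^4=O(a)$.
\item The only singular contribution is
\[
 \frac c2 y_i^2=\frac{m(3-m)}{18}\frac{a}{r^2}\bigl(Y_i^2+O(r^2)\bigr)=\frac{m(3-m)}{18}\frac{a}{r^2}Y_i^2+O(a).
\]
\end{itemize}
Multiplying by $\theta_i$ and summing gives
\[
 \mathscr J=\frac{m(3-m)}{18}\frac{a}{r^2}P_0+O(a).
\]
The correction $\frac{\varepsilon a}{4}P^2$ is $O(a)$, which yields the first equality in \eqref{4-9}. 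For $m\in\{1,2\}$ one has $m(3-m)=2$, which gives the second form $\frac{a}{9r^2}P_0$. Note that $a/r^2=r^{4m/3-2}$ dominates $a$ as $r\to0$, so the leading term is genuinely leading. Positivity of $P_0$ follows from $Y_i>0$ and $\theta_i>0$ (Lemma~\ref{lemma3-1}).

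For \eqref{4-10}, restrict to $r\ge1$. There the weights satisfy
\[
 a=r^{4m/3},\qquad |b|\le Cr^{4m/3-1}\le Cr^{4m/3},\qquad |c|\le Cr^{4m/3},
\]
so all three grow at most polynomially. The potential obeys $|\mathcal V|\le 1+\varepsilon\sup P<\infty$, since the $y_i$ are bounded (continuous and decaying). By \eqref{3-8}, every term in $J_i$ is quadratic or quartic in $(y_i,y_i')$. Hence
\[
 |J_i(r)|\le Cr^{4m/3}e^{-2\kappa r}\to0,
\]
and likewise $a P^2\le Cr^{4m/3}e^{-4\kappa r}\to0$. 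Then $\mathscr K\to0$ follows from its definition \eqref{4-6} as a linear combination of these quantities.

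The argument is routine. The only point needing care is the small-$r$ bookkeeping: the $b\,y_iy_i'$ term has a $1/r$ weight, and one must check that it is still $O(a)$ rather than $O(a/r^2)$. This uses the vanishing of $y_i'(0)$, i.e.\ $y_i'=O(r)$ from \eqref{3-7}. Similarly, the $O(r^2)$ remainder in $y_i^2$ is needed so that the $c$-term contributes only $O(a)$ beyond its leading part.
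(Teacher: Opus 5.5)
Your proposal is correct and follows essentially the same route as the paper. Near the origin you substitute the expansions \eqref{3-7} term by term, with the $c$-term as the only singular contribution and $m(3-m)=2$ giving the second form; at infinity you use the exponential decay \eqref{3-8} against polynomially growing weights $a,b,c$. The only cosmetic difference is that you estimate each $J_i$ and then sum, whereas the paper writes $\mathscr K$ as one combined expression before estimating.
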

\begin{proof}
As $r\to 0$, Lemma~\ref{lemma3-2} gives
\[
 y_i=Y_i+O(r^2),
 \qquad
 y_i'=O(r).
\]
Hence, we have
\[
 \begin{gathered}
 P=\theta _1y_1^2+\theta _2y_2^2
   =P_0+O(r^2),\\
 \sum_{i=1}^2\theta_i(y_i')^2=O(r^2),
 \qquad
 \sum_{i=1}^2\theta_i y_i y_i'=O(r),\\
 \sum_{i=1}^2\theta_i y_i^4=O(1),
 \qquad
 \mathcal V=1-\varepsilon P=O(1).
 \end{gathered}
\]
Using \eqref{4-2} and \eqref{4-6}, we can write
\begin{align*}
 \mathscr K
 =
 \frac a2\sum_{i=1}^2\theta_i(y_i')^2
 +b\sum_{i=1}^2\theta_i y_i y_i'
 +\frac c2P
 -\frac a2\mathcal V P+\frac{a\delta}{4}\sum_{i=1}^2\theta_i y_i^4
 -\frac{\varepsilon a}{4}P^2.
\end{align*}
Since $ b=\frac{ma}{3r}$ and $
 c=\frac{m(3-m)}{9r^2}a$, 
the preceding estimates yield
\begin{align*}
 \mathscr K
 ={}&
 O(ar^2)
 +\frac{ma}{3r}O(r)
 +\frac{m(3-m)}{18}\frac a{r^2}
     \bigl(P_0+O(r^2)\bigr)\\
 &+O(a)+O(a)+O(a)\\
 ={}&
 \frac{m(3-m)}{18}\frac a{r^2}P_0+O(a).
\end{align*}
Because $m(3-m)=2$ for $m\in\{1,2\}$, 
we obtain
\[
 \mathscr K(r)
 =\frac{a(r)}{9r^2}P_0+O(a(r))\quad \mbox{as}~~r\to 0,
\]
which proves \eqref{4-9}.

Since $a(r)=r^{\frac{4m}{3}}$, the preceding expansion reads
\[
 \mathscr K(r)=
 \begin{cases}
  \displaystyle \frac{P_0}{9}r^{-\frac23}+O(r^{\frac43}),
       & N=2,\\[2mm]
  \displaystyle \frac{P_0}{9}r^{\frac23}+O(r^{\frac83}),
       & N=3,
 \end{cases}
 \qquad r\to0.
\]
In particular,
\[
 \lim_{r\to0}\mathscr K(r)=+\infty
 \quad\text{if }N=2,
 \qquad
 \lim_{r\to 0}\mathscr K(r)=0
 \quad\text{if }N=3.
\]

Next, as $r\to\infty$, Lemma~\ref{lemma3-2} gives
\[
 |y_i(r)|+|y_i'(r)|
 \leq Ce^{-\kappa r}.
\]
It follows that
\[
 P(r)=O(e^{-2\kappa r}),
 \qquad
 \mathcal V(r)=1+O(e^{-2\kappa r}).
\]
Moreover, for $m\in\{1,2\}$ and $r\geq1$,
\[
 |a(r)|+|b(r)|+|c(r)|
 \leq C\bigl(1+r^{\frac{8}{3}}\bigr).
\]
Therefore, using \eqref{4-2},
\[
 |J_i(r)|
 \leq
 C\bigl(1+r^{\frac{8}{3}}\bigr)e^{-2\kappa r}
 +Cr^{\frac{8}{3}}e^{-4\kappa r}
 \to0
 \qquad \mbox{as}~~~r\to\infty.
\]
Similarly,
\[
 a(r)P(r)^2
 =O\left(r^{\frac{8}{3}}e^{-4\kappa r}\right)
 \to0 \qquad
 \mbox{as}~~~r\to\infty.
\]
Finally, since
\[
 \mathscr K
 =\theta _1J_1+\theta _2J_2
  -\frac{\varepsilon a}{4}P^2,
\]
we conclude that $ \mathscr K(r)\to0$ as $r\to\infty$, which proves \eqref{4-10}.
\end{proof}

\begin{proposition}
\label{prop4-4}
Let $N\in\{2,3\}$ and $0<\beta<\min\{\mu _1,\mu _2\}$.  Suppose that
$(y_1,y_2)\in\Hrad(\R^N)\times\Hrad(\R^N)$ is a positive solution of
\eqref{3-5}.  Set $m:=N-1$ and define
$a,b,c,P,J_i,\mathscr J$, and $\mathscr K$ by \eqref{4-1},
\eqref{3-4}, \eqref{4-2}, and \eqref{4-6}.  Then, for every
$r>0$,
\begin{equation}\label{4-11}
 \mathscr K(r)>0,
 \qquad
 \mathscr J(r)
 =\mathscr K(r)+\frac{\varepsilon a(r)}4P(r)^2>0.
\end{equation}
\end{proposition}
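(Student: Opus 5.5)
We analyse the sign of $\mathscr K'$ from Lemma~\ref{lemma4-2} separately for $m=1$ and $m=2$, and then integrate from infinity using the decay \eqref{4-10}. The prefactor $\frac{ma(r)P(r)}{3r}$ is strictly positive for $r>0$, since $a>0$ and $P>0$ (both $y_i$ are positive and $\theta_i>0$). Hence the sign of $\mathscr K'$ equals the sign of
\[
 h_m(r):=\frac{(3-m)(2m-3)}{9r^2}-1 .
\]

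For $m=2$ (so $N=3$) we have $(3-m)(2m-3)=1$, and therefore $h_2(r)=\frac{1}{9r^2}-1$. This is positive on $(0,\tfrac13)$ and negative on $(\tfrac13,\infty)$, so $\mathscr K$ is strictly increasing on $(0,\tfrac13]$ and strictly decreasing on $[\tfrac13,\infty)$.
\begin{itemize}
 \item On $[\tfrac13,\infty)$, strict decrease together with $\mathscr K(r)\to0$ as $r\to\infty$ from \eqref{4-10} gives $\mathscr K(r)>\lim_{s\to\infty}\mathscr K(s)=0$. Concretely, $\mathscr K(r)=-\int_r^\infty\mathscr K'(s)\dd s$, and this integral is strictly positive.
 \item On $(0,\tfrac13]$, the expansion \eqref{4-9} gives $\mathscr K(r)=\frac{P_0}{9}r^{2/3}+O(r^{8/3})\to0$ as $r\to0$. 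Strict increase then yields $\mathscr K(r)>\lim_{s\to0}\mathscr K(s)=0$.
\end{itemize}

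For $m=1$ (so $N=2$) we have $(3-m)(2m-3)=-2$, and therefore $h_1(r)=-\frac{2}{9r^2}-1<0$ for every $r>0$. So $\mathscr K$ is strictly decreasing on all of $(0,\infty)$. Combined with $\mathscr K(r)\to0$ at infinity, this gives $\mathscr K>0$ everywhere. This is consistent with $\mathscr K\to+\infty$ as $r\to0$, which follows from \eqref{4-9}.

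Finally, $\mathscr J=\mathscr K+\frac{\varepsilon a}{4}P^2$. Here $\varepsilon>0$ by Lemma~\ref{lemma3-1}, $a>0$, and $P^2>0$, so $\mathscr J>\mathscr K>0$.

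There is no real obstacle, since the sign analysis is elementary once Lemmas~\ref{lemma4-2} and \ref{lemma4-3} are available. The only points needing care are the following.
\begin{itemize}
 \item The boundary limits must be justified. At infinity this comes from \eqref{4-10}. At the origin for $N=3$ it comes from the sign of the leading coefficient $P_0/9>0$ in \eqref{4-9}, which is why $P_0>0$ matters.
 \item The monotonicity must be strict. It is strict because $P>0$, so $\mathscr K'$ vanishes only at the isolated point $r=\tfrac13$ when $m=2$, and nowhere when $m=1$.
\end{itemize}
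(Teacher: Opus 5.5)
Your proposal is correct and follows essentially the same route as the paper. The paper likewise reads off the sign of $\mathscr K'$ from Lemma~\ref{lemma4-2}: it is negative everywhere for $N=2$, and for $N=3$ it changes sign only at $r=\tfrac13$. It then uses the limits \eqref{4-9} at the origin and \eqref{4-10} at infinity to get $\mathscr K>0$, and concludes with $\mathscr J=\mathscr K+\frac{\varepsilon a}{4}P^2>\mathscr K$.

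One small correction to your closing remark. For $N=3$ the argument only uses $\lim_{r\to0}\mathscr K(r)=0$, which holds because $r^{2/3}\to0$ whatever the sign of $P_0$. Positivity on $(0,\tfrac13]$ then comes from strict monotonicity, not from the sign of the leading coefficient.
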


\begin{proof}
If $N=2$, then $m=1$.  Lemma~\ref{lemma4-2} indicates 
\[
 \mathscr K'
 =-\frac{aP}{3r}\left(1+\frac{2}{9r^2}\right)<0.
\]
Since $\lim_{r\to \infty}\mathscr K(r)=0$, integration over $(r,\infty)$
gives
\[
 \mathscr K(r)=-\int_r^\infty\mathscr K'(s)\dd s>0.
\]

If $N=3$, then $m=2$, and
\[
 \mathscr K'
 =\frac{2aP}{3r}\left(\frac{1}{9r^2}-1\right).
\]
It follows that $\mathscr K'(r)>0$ on $(0,\frac{1}{3})$ and
$\mathscr K'(r)<0$ on $(\frac{1}{3},\infty)$.  Then,  \eqref{4-9} gives $\lim_{r\to 0}\mathscr K(r)=0$. Combined with
$\lim_{r\to\infty}\mathscr K(r)=0$, for $0<r\leq\frac{1}{3}$ we have
\[
 \mathscr K(r)=\int_0^r\mathscr K'(s)\dd s>0,
\]
whereas for $r\geq\frac13$ we have
\[
 \mathscr K(r)=-\int_r^\infty\mathscr K'(s)\dd s>0.
\]
Thus $\mathscr K(r)>0$ in both dimensions. Finally,
$\varepsilon>0$, $a>0$, and $P>0$, so \eqref{4-6} gives
$\mathscr J>0$. The proof is thus completed.
\end{proof}

\section{Analysis of the ratio of the two components}\label{section5}

\begin{lemma}
\label{lemma5-1}
Let $N\in\{2,3\}$ and $0<\beta<\min\{\mu _1,\mu _2\}$.  Suppose that
$(y_1,y_2)\in\Hrad(\R^N)\times\Hrad(\R^N)$ is a positive solution of
\eqref{3-5}, and let $Y_i:=y_i(0)$.  If $Y_1=Y_2$, then
$y_1(r)=y_2(r)$ for every $r\geq0$.
\end{lemma}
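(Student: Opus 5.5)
The plan is to reduce the statement to uniqueness for a regular singular initial value problem for the radial ODE system \eqref{3-6}. The two components satisfy
\[
 y_i''+\frac mr y_i'-\bigl(1-\varepsilon(\theta_1y_1^2+\theta_2y_2^2)\bigr)y_i+\delta y_i^3=0,
 \qquad i\in\{1,2\}.
\]
This system is symmetric under the exchange $y_1\leftrightarrow y_2$ only if $\theta_1=\theta_2$, so we do not use symmetry. Instead we observe that the pair $(w_0,w_0)$ also solves the system. Here $w_0$ is the solution of the scalar ODE
\[
 w_0''+\frac mr w_0'-(1-\varepsilon w_0^2)w_0+\delta w_0^3=0,
 \qquad w_0(0)=Y_1,\quad w_0'(0)=0.
\]
Indeed, if $y_1=y_2=w_0$, then $P=(\theta_1+\theta_2)w_0^2=w_0^2$ because $\theta_1+\theta_2=1$. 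It therefore suffices to show that solutions of \eqref{3-6} that are regular at the origin, i.e.\ $C^1$ on $[0,\infty)$ with $y_i'(0)=0$, are uniquely determined by $(y_1(0),y_2(0))$. Since $y_1$ and $y_2$ are smooth by Lemma~\ref{lemma3-2}, they are regular in this sense.

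Work directly with the difference $d:=y_1-y_2$, which satisfies $d(0)=0$ and $d'(0)=0$ by Lemma~\ref{lemma3-2}. Subtracting the two equations gives
\[
 (r^m d')'=r^m\bigl(\mathcal V-\delta(y_1^2+y_1y_2+y_2^2)\bigr)d=:r^m h(r)\,d.
\]
The coefficient $h$ is continuous and bounded on every interval $[0,R]$, with $|h|\le M_R$. Integrating twice from $0$, and using $d(0)=0$ and $r^md'(r)\to0$ as $r\to0$, we get
\[
 d'(r)=r^{-m}\int_0^r s^m h(s)d(s)\dd s,
 \qquad
 |d(r)|\le \int_0^r\!\! \int_0^t \Bigl(\frac st\Bigr)^m M_R|d(s)|\dd s\dd t
 \le M_R\, r\int_0^r |d(s)|\dd s .
\]
Hence $\phi(r):=\sup_{[0,r]}|d|$ satisfies $\phi(r)\le M_R r^2\phi(r)$. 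For $r<M_R^{-1/2}$ this forces $\phi(r)=0$, so $d\equiv0$ on a small interval $[0,r_0]$.

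On $[r_0,\infty)$ the system is a regular ODE with smooth coefficients. The functions $y_1$ and $y_2$ both solve the single linear equation for $d$, and $d(r_0)=d'(r_0)=0$, so standard Cauchy--Lipschitz uniqueness, or Gronwall applied to the linear equation for $d$, gives $d\equiv0$ on $[r_0,\infty)$. Therefore $y_1\equiv y_2$.

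The only delicate step is the singular point $r=0$, where the term $\frac mr y'$ prevents a direct appeal to Picard--Lindelöf. The integral-equation Gronwall estimate above handles it. Its one technical requirement is the kernel bound $(s/t)^m\le1$, which together with the behaviour $r^m d'(r)\to0$ at the origin, guaranteed by the smoothness in Lemma~\ref{lemma3-2}, justifies the representation formula.
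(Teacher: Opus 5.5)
Your proof is correct and follows the paper's argument. You subtract the equations to get $(r^m d')'=r^m h\,d$ with $d(0)=d'(0)=0$, integrate twice from the singular point to get $\sup_{[0,r]}|d|\le C r^2\sup_{[0,r]}|d|$ and hence $d\equiv0$ near $0$, and then conclude by uniqueness for the linear regular ODE away from the origin. The preliminary detour through the scalar solution $w_0$ and the pair $(w_0,w_0)$ is never used and can be dropped.
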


\begin{proof}
Let $d(r):=y_1(r)-y_2(r)$.  Subtracting the two equations in
\eqref{3-6} gives
\begin{equation}\label{5-1}
\begin{aligned}
 \bigl(r^m d'(r)\bigr)'=r^mB(r)d(r),\\
\end{aligned}
\end{equation}
where
\begin{equation*}
 B(r):=\mathcal V(r)
       -\delta\bigl(y_1(r)^2+y_1(r)y_2(r)+y_2(r)^2\bigr).
\end{equation*}
The regular centre data are $d(0)=d'(0)=0$. In addition, the expansion $d'(r)=O(r)$ from Lemma~\ref{lemma3-2} gives
$ \lim_{r\to 0}r^m d'(r)=0$. 

 Fix $\rho_0>0$ and set
$M:=\sup\limits_{0\leq r\leq\rho_0}|B(r)|<\infty$.  For
$0<\rho\leq\rho_0$, denote by
\[
 M_d(\rho):=\sup_{0\leq s\leq\rho}|d(s)|.
\]
For $0<r\leq\rho$, integration of \eqref{5-1} from $0$ to $r$
yields
\[
 |r^md'(r)|
 \leq M M_d(\rho)\int_0^r s^m \dd s
 =\frac{M}{m+1}M_d(\rho)r^{m+1}.
\]
Thus
\[
 |d'(r)|\leq\frac{Mr}{m+1}M_d(\rho),
 \qquad
 |d(r)|\leq\frac{Mr^2}{2(m+1)}M_d(\rho).
\]
Choose $\rho>0$ such that $\frac{M\rho^2}{2(m+1)}<1$.  Taking the supremum over
$r\in[0,\rho]$ forces $M_d(\rho)=0$.  Since $d$ vanishes on $[0,\rho]$,
uniqueness for the regular initial-value problem at $r=\rho$ gives
$d(r)=0$ for every $r\geq0$.
\end{proof}

We now turn to the ratio of the two components in the common-potential system. The following lemma describes its behavior near the origin. 
\begin{lemma}
\label{lemma5-2}
Let $N\in\{2,3\}$ and $0<\beta<\min\{\mu _1,\mu _2\}$.  Suppose that
$(y_1,y_2)\in\Hrad(\R^N)\times\Hrad(\R^N)$ is a positive solution of
\eqref{3-5}, let $Y_i:=y_i(0)$, and assume $Y_2>Y_1$.  Set
$m:=N-1$ and define
\begin{equation}\label{5-2}
 \eta(r):=\frac{y_2(r)}{y_1(r)},
 \qquad \eta(0)=\frac{Y_2}{Y_1}>1,
 \qquad r\geq0.
\end{equation}
Then
\begin{equation}\label{5-3}
 \eta'(0)=0,
 \qquad
 \eta''(0)
 =-\frac{\delta}{N}\eta(0)(Y_2^2-Y_1^2)<0.
\end{equation}
In particular, $\eta'(r)<0$ for sufficiently small $r>0$.  Moreover,
\begin{equation}\label{5-4}
 \bigl(r^my_1(r)^2\eta'(r)\bigr)'
 =-\delta r^my_1(r)^4\eta(r)\bigl(\eta(r)^2-1\bigr),
\end{equation}
and
\begin{equation}\label{5-5}
\lim_{r\to 0} r^my_1(r)^2\eta'(r)=0.
\end{equation}
\end{lemma}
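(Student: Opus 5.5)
Three things have to be checked: the flux identity \eqref{5-4}, its boundary value \eqref{5-5} at the origin, and the Taylor data \eqref{5-3}. We use only the radial equations \eqref{3-6} and the expansions \eqref{3-7} of Lemma~\ref{lemma3-2}. Since $y_1>0$ is smooth (even as a function of $te_1$), $\eta=y_2/y_1$ is smooth on $[0,\infty)$ and even at the origin, so $\eta'(0)=0$ at once.

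\emph{Step 1: the flux identity.} Because the potential $\mathcal V$ is common to both components, the Wronskian-type quantity
\[
 W:=r^m\bigl(y_1y_2'-y_2y_1'\bigr)
\]
satisfies
\[
 W'=y_1\bigl(r^my_2'\bigr)'-y_2\bigl(r^my_1'\bigr)'.
\]
By \eqref{3-6}, $(r^my_i')'=r^m(\mathcal V y_i-\delta y_i^3)$. The $\mathcal V$ terms therefore cancel, leaving
\[
 W'=-\delta r^m y_1y_2\bigl(y_2^2-y_1^2\bigr).
\]
Since $y_1y_2'-y_2y_1'=y_1^2\eta'$, we have $W=r^my_1^2\eta'$. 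Substituting $y_2=\eta y_1$ gives
\[
 y_1y_2(y_2^2-y_1^2)=y_1^4\eta(\eta^2-1),
\]
which is \eqref{5-4}.

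\emph{Step 2: the limit at the origin.} By \eqref{3-7}, $y_i=Y_i+O(r^2)$ and $y_i'=O(r)$. Hence $y_1y_2'-y_2y_1'=O(r)$, so $W=O(r^{m+1})\to0$ as $r\to0$. This is \eqref{5-5}.

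\emph{Step 3: the second derivative at the origin.} Rather than expanding the quotient directly, integrate \eqref{5-4} from $0$ to $r$ using \eqref{5-5}:
\[
 r^my_1^2\eta'(r)=-\delta\int_0^r s^m y_1^4\eta(\eta^2-1)\dd s .
\]
The integrand equals $s^m\bigl(Y_1^4\eta(0)(\eta(0)^2-1)+o(1)\bigr)$. Therefore
\[
 \eta'(r)=-\frac{\delta}{m+1}\,Y_1^2\eta(0)\bigl(\eta(0)^2-1\bigr)\,r+o(r).
\]
Since $m+1=N$ and $Y_1^2(\eta(0)^2-1)=Y_2^2-Y_1^2$, this gives
\[
 \eta''(0)=-\frac{\delta}{N}\eta(0)\bigl(Y_2^2-Y_1^2\bigr).
\]
This is negative because $\delta>0$ and $Y_2>Y_1$.

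\emph{Conclusion.} Since $\eta'(0)=0$ and $\eta''(0)<0$, we get $\eta'(r)<0$ for small $r>0$. Alternatively, this follows directly from the integrated identity in Step 3, because $\eta>1$ near $0$.

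There is no real obstacle here. The only care needed is the bookkeeping in Step 3. Differentiating the quotient $y_2/y_1$ twice and inserting \eqref{3-7} would also work, but there the $\mathcal V(0)$ contributions cancel only after some algebra. The integrated-flux route makes that cancellation automatic.
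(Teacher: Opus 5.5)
Your proof is correct, and it follows the paper's structure: the same flux identity, the same boundary limit at the origin, and the same Taylor data. The $\eta'(0)=0$ step and the limit \eqref{5-5} are argued exactly as in the paper. Your Wronskian $W=r^m(y_1y_2'-y_2y_1')$ is the same quantity as $r^my_1^2\eta'$. The paper instead subtracts $\eta$ times the $y_1$-equation from the $y_2$-equation, obtains a second-order ODE for $\eta$, and multiplies by $r^my_1^2$. The cancellation of $\mathcal V$ is equally immediate either way.

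The one genuine difference is how you get \eqref{5-3}. The paper computes $\eta''(0)$ first, independently of \eqref{5-4}: it differentiates $y_2=\eta y_1$ twice at $r=0$ and inserts $y_i''(0)=(\mathcal V(0)Y_i-\delta Y_i^3)/N$ from Lemma~\ref{lemma3-2}. You instead integrate \eqref{5-4} from the origin using \eqref{5-5} and read off the linear coefficient of $\eta'$.

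Your route needs only continuity of $y_1$ and $\eta$ at $0$, plus $C^2$ regularity of $\eta$ to identify $\lim_{r\to0}\eta'(r)/r$ with $\eta''(0)$. It does not need the explicit value of $y_i''(0)$. It also gives $\eta'<0$ near $0$ straight from the sign of the integrand, by the same integration from the origin that the paper uses in Lemma~\ref{lemma5-4}. The paper's route is a self-contained pointwise computation.

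One small correction to your closing remark: in the direct computation the $\mathcal V(0)$ terms cancel in a single line, since $y_2''(0)-\eta(0)y_1''(0)=\frac{\eta(0)Y_1}{N}\bigl[(\mathcal V(0)-\delta Y_2^2)-(\mathcal V(0)-\delta Y_1^2)\bigr]$. Neither route is really harder than the other.
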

\begin{proof}
Since $y_1(0)=Y_1>0$, the function $\eta=\frac{y_2}{y_1}$ is smooth near the
origin.  Using $y_1'(0)=y_2'(0)=0$, we obtain
\[
 \eta'(0)
 =\frac{y_2'(0)Y_1-Y_2y_1'(0)}{Y_1^2}
 =0.
\]
Differentiating the identity $y_2=\eta y_1$ twice gives
$ y_2''
 =\eta''y_1+2\eta'y_1'+\eta y_1''$. 
Evaluating this identity at $r=0$, we obtain
\[
 \eta''(0)
 =\frac{y_2''(0)-\eta(0)y_1''(0)}{Y_1}.
\]
By Lemma~\ref{lemma3-2}, we have
\[
 y_i''(0)
 =\frac{\mathcal V(0)Y_i-\delta Y_i^3}{N},
 \qquad
 \eta(0)=\frac{Y_2}{Y_1}.
\]
Therefore,
\begin{align*}
 \eta''(0)
 =
 \frac{\eta(0)}{N}
 \left[
   \mathcal V(0)-\delta Y_2^2
   -\bigl(\mathcal V(0)-\delta Y_1^2\bigr)
 \right]=-\frac{\delta}{N}\eta(0)(Y_2^2-Y_1^2)<0.
\end{align*}
This proves \eqref{5-3}.

We next derive the flux identity.  Since $y_2=\eta y_1$, it follows that
\[
 y_2'=\eta'y_1+\eta y_1',
 \qquad
 y_2''=\eta''y_1+2\eta'y_1'+\eta y_1''.
\]
Subtracting $\eta$ times the equation for $y_1$ in
\eqref{3-6} from the equation for $y_2$, we obtain
\begin{align*}
 0
 &=
 \left(
   y_2''+\frac mr y_2'-\mathcal V y_2+\delta y_2^3
 \right)-\eta
 \left(
   y_1''+\frac mr y_1'-\mathcal V y_1+\delta y_1^3
 \right)\\
 &=
 y_1\eta''
 +\left(2y_1'+\frac mr y_1\right)\eta'
 +\delta y_1^3\eta(\eta^2-1).
\end{align*}
Dividing by the positive function $y_1$, we have
\[
 \eta''
 +\left(\frac mr+2\frac{y_1'}{y_1}\right)\eta'
 +\delta y_1^2\eta(\eta^2-1)=0.
\]
Multiplying by $r^my_1^2$  on both sides of the above equation, we can derive \eqref{5-4}.

Finally, by \eqref{3-7} we have 
$ y_i(r)=Y_i+O(r^2)$ and $y_i'(r)=O(r)$. 
Hence
\begin{align*}
 \eta'(r)
 &=
 \frac{y_2'(r)y_1(r)-y_2(r)y_1'(r)}{y_1(r)^2}\\
 &=
 \frac{
   O(r)\bigl(Y_1+O(r^2)\bigr)
   -\bigl(Y_2+O(r^2)\bigr)O(r)
 }{
   \bigl(Y_1+O(r^2)\bigr)^2
 }=O(r).
\end{align*}
Consequently,
\[
 r^my_1(r)^2\eta'(r)
 =r^m\bigl(Y_1^2+O(r^2)\bigr)O(r)
 =O(r^{m+1})\to0\quad\mbox{as}~~r\to 0,
\]
which proves \eqref{5-5}.
\end{proof}

We proceed to analyze a key new function $Z$ (see \eqref{5-6}), which plays an important role in our proof.  
\begin{lemma}
\label{lemma5-3}
Let $N\in\{2,3\}$ and $0<\beta<\min\{\mu _1,\mu _2\}$.  Suppose that
$(y_1,y_2)\in\Hrad(\R^N)\times\Hrad(\R^N)$ is a positive solution of
\eqref{3-5}, let $Y_i:=y_i(0)$, and assume $Y_2>Y_1$.  Set
$m:=N-1$.  Define
$a,b,c,P,J_i$, and $\mathscr J$ by \eqref{4-1}, \eqref{3-4},
\eqref{4-2}, and \eqref{4-6}.  Let $\eta$ be given by
\eqref{5-2}, and define, for $r>0$,
\begin{equation}\label{5-6}
 X(r):=\eta(r)^2J_1(r)-J_2(r),
 \quad
 \mathscr D(r):=\theta _1+\theta _2\eta(r)^2>0,
 \quad
 Z(r):=\frac{X(r)}{\mathscr D(r)}.
\end{equation}
Then
\begin{equation}\label{5-7}
 X'(r)=2\eta(r)\eta'(r)J_1(r),
 \qquad
 Z'(r)=\frac{2\eta(r)\eta'(r)}{\mathscr D(r)^2}\mathscr J(r).
\end{equation}
Furthermore, we have
\begin{equation}\label{5-8}
 \begin{split}
 X=-a\eta\eta' y_1y_1'
       -\frac a2(\eta')^2y_1^2
       -b\eta\eta'y_1^2+\frac{a\delta}{4}\eta^2y_1^4(1-\eta^2),
 \end{split}
\end{equation}
and
\begin{equation}\label{5-9}
 \lim_{r\to 0}X(r)=0,
 \qquad \lim_{r\to 0}Z(r)=0.
\end{equation}
If $\eta$ is nonincreasing on $(0,\infty)$, then 
\begin{equation}\label{5-10}
 \lim_{r\to\infty}Z(r)=0.
\end{equation}
\end{lemma}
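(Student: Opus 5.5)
We will prove the four assertions of Lemma~\ref{lemma5-3} in order: first the derivative formulas \eqref{5-7}, then the explicit formula \eqref{5-8}, and finally the two limits \eqref{5-9} and \eqref{5-10}.

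\textbf{Step 1: the derivative formulas \eqref{5-7}.} By Lemma~\ref{lemma4-1}, $J_i'=Gy_i^2$ with the same $G$ for both components. Since $y_2^2=\eta^2y_1^2$, this gives $J_2'=\eta^2 J_1'$, and hence
\[
X'=2\eta\eta'J_1+\eta^2J_1'-J_2'=2\eta\eta'J_1 .
\]
For $Z$ we use $\mathscr D'=2\theta_2\eta\eta'$. Then
\[
Z'=\frac{X'\mathscr D-X\mathscr D'}{\mathscr D^2}=\frac{2\eta\eta'}{\mathscr D^2}\bigl(J_1(\theta_1+\theta_2\eta^2)-\theta_2(\eta^2J_1-J_2)\bigr)=\frac{2\eta\eta'}{\mathscr D^2}\bigl(\theta_1J_1+\theta_2J_2\bigr),
\]
which is exactly $\frac{2\eta\eta'}{\mathscr D^2}\mathscr J$.

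\textbf{Step 2: the formula \eqref{5-8}.} We substitute $y_2=\eta y_1$ and $y_2'=\eta'y_1+\eta y_1'$ into \eqref{4-2}. The $c$-terms and the $\mathcal V$-terms in $\eta^2J_1-J_2$ cancel exactly, because both are multiples of $y_i^2$ and $y_2^2=\eta^2y_1^2$. The remaining contributions are:
\begin{itemize}
\item the $a$-term, which leaves $-\frac a2\bigl((\eta')^2y_1^2+2\eta\eta'y_1y_1'\bigr)$;
\item the $b$-term, which leaves $-b\,\eta\eta'y_1^2$;
\item the quartic term, which gives $\frac{a\delta}{4}\bigl(\eta^2y_1^4-\eta^4y_1^4\bigr)$.
\end{itemize}
Collecting these yields \eqref{5-8}.

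\textbf{Step 3: the limit at the origin \eqref{5-9}.} We combine \eqref{5-8} with the small-$r$ information $\eta'=O(r)$, $y_1'=O(r)$ and $y_1,\eta=O(1)$ from Lemmas~\ref{lemma3-2} and~\ref{lemma5-2}. Since $b=\frac{ma}{3r}$, every term of \eqref{5-8} is $O(a)$, and $a(r)=r^{4m/3}\to0$. Hence $X\to0$. Because $\mathscr D\ge\theta_1>0$, it follows that $Z\to0$ as well.

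This step is the reason for using \eqref{5-8} rather than the definition of $X$ directly. Each $J_i$ contains the term $\frac{c}{2}y_i^2\sim r^{-2/3}$, which blows up when $N=2$; these singular pieces cancel in $\eta^2J_1-J_2$, and \eqref{5-8} makes that cancellation explicit.

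\textbf{Step 4: the limit at infinity \eqref{5-10}.} This is the main obstacle, since $\eta$ is not known to stay bounded at infinity a priori. The hypothesis that $\eta$ is nonincreasing is what resolves it. Together with $\eta>0$, it gives $0<\eta\le\eta(0)$ on $(0,\infty)$. The denominator $\mathscr D$ is bounded below by $\theta_1>0$.

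By \eqref{4-10}, each $J_i(r)\to0$ as $r\to\infty$. Therefore
\[
|X|\le\eta(0)^2|J_1|+|J_2|\to0,
\]
and so $Z\to0$. This argument uses the definition of $X$ directly and avoids any need to control $\eta'$ at infinity.
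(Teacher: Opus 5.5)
Your proposal is correct and follows essentially the same route as the paper: $J_2'=\eta^2J_1'$ gives \eqref{5-7}, direct substitution of $y_2=\eta y_1$ gives \eqref{5-8}, every term of \eqref{5-8} is $O(a)$ at the origin, and at infinity the bound $|Z|\le\bigl(\eta(0)^2|J_1|+|J_2|\bigr)/\theta_1$ combines with \eqref{4-10}. Your remark that the singular pieces $\frac c2 y_i^2\sim r^{-2/3}$ for $N=2$ cancel in $X$ explains why the limit at the origin is read off from \eqref{5-8} rather than from the definition of $X$.
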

\begin{proof}
By Lemma~\ref{lemma4-1} and the identity $y_2=\eta y_1$, we have
\[ J_2'=Gy_2^2=G\eta^2y_1^2=\eta^2J_1'. \]
Hence,
\[  X'=\bigl(\eta^2J_1-J_2\bigr)'=2\eta\eta'J_1+\eta^2J_1'-J_2'=2\eta\eta'J_1.\]
Moreover, since $X=\eta^2J_1-J_2$, we have
$ J_2=\eta^2J_1-X$. 
It follows that
\begin{align*}
 \mathscr J&=\theta _1J_1+\theta _2J_2=\theta _1J_1+\theta _2(\eta^2J_1-X)\\
 &=(\theta _1+\theta _2\eta^2)J_1-\theta _2X=\mathscr D J_1-\theta _2X.
\end{align*}
Since $
 \mathscr D'=(\theta _1+\theta _2\eta^2)'
 =2\theta _2\eta\eta'$, we deduce that
\begin{align*}
 Z'&=\frac{X'\mathscr D-X\mathscr D'}{\mathscr D^2}=\frac{
   2\eta\eta'J_1\mathscr D
   -2\theta _2\eta\eta'X
 }{\mathscr D^2}\\
 &=\frac{2\eta\eta'}{\mathscr D^2}   \bigl(\mathscr D J_1-\theta _2X\bigr)=\frac{2\eta\eta'}{\mathscr D^2}\mathscr J.
\end{align*}
This proves \eqref{5-7}.

We next derive the exact formula for $X$.  Denoting by $y:=y_1$, we have 
$ y_2=\eta y$ and $y_2'=\eta'y+\eta y'$. 
By \eqref{4-2},
\begin{align*}
 \eta^2J_1
 ={}&
 \frac a2\eta^2(y')^2
 +b\eta^2yy'
 +\frac c2\eta^2y^2
 -\frac a2\mathcal V\eta^2y^2
 +\frac{a\delta}{4}\eta^2y^4,
\end{align*}
whereas
\begin{align*}
 J_2 = \frac a2(\eta'y+\eta y')^2
 +b\eta y(\eta'y+\eta y')
 +\frac c2\eta^2y^2-\frac a2\mathcal V\eta^2y^2
 +\frac{a\delta}{4}\eta^4y^4.
\end{align*}
Therefore, 
\begin{align*}
 X = \frac a2
 \left[   \eta^2(y')^2-(\eta'y+\eta y')^2
 \right]+b\left[\eta^2yy'-\eta y(\eta'y+\eta y')
 \right]
 +\frac{a\delta}{4}(\eta^2-\eta^4)y^4.
\end{align*}
Expanding the two squared and mixed terms gives
\begin{align*}
 \eta^2(y')^2-(\eta'y+\eta y')^2
 &=-2\eta\eta'yy'-(\eta')^2y^2,\\
 \eta^2yy'-\eta y(\eta'y+\eta y')
 &=-\eta\eta'y^2.
\end{align*}
Consequently,
\[ X=-a\eta\eta'yy'-\frac a2(\eta')^2y^2-b\eta\eta'y^2+\frac{a\delta}{4}\eta^2y^4(1-\eta^2),
\]
which is \eqref{5-8}.

As $r\to 0$, \eqref{3-7} and
\eqref{5-3} give
\[
 \begin{gathered}
 y(r)=Y_1+O(r^2),
 \qquad y'(r)=O(r),\\
 \eta(r)=\eta(0)+O(r^2),
 \qquad \eta'(r)=O(r).
 \end{gathered}
\]
Since $a=r^{\frac{4m}{3}}$ and $ b=\frac{ma}{3r}$, the four terms in \eqref{5-8} satisfy
\begin{align*}
 a\eta\eta'yy'&=O(ar^2),\qquad a(\eta')^2y^2=O(ar^2),\\
 b\eta\eta'y^2 &=\frac{ma}{3r}O(r)=O(a),\qquad a\eta^2y^4(1-\eta^2)=O(a).
\end{align*}
Thus, we obtain
\[
 X(r)=O(a(r))=O(r^{\frac{4m}{3}})\to 0\quad \mbox{as}~~~r\to 0.
\]
Furthermore, 
\[
 \mathscr D(r)
 =\theta _1+\theta _2\eta(0)^2+O(r^2),\qquad \theta_1+\theta_2\eta(0)^2>0.
\]
As a result, 
\[  Z(r)=\frac{X(r)}{\mathscr D(r)}=O(r^{\frac{4m}{3}})\to0\quad \mbox{as}~~~r\to 0.\]
This proves \eqref{5-9}.

Finally, suppose that $\eta$ is nonincreasing.  Since $\eta>0$,
\[
 0<\eta(r)\leq\eta(0),
 \qquad
 \mathscr D(r)=\theta _1+\theta _2\eta(r)^2\geq\theta _1.
\]
Therefore, we obtain
\begin{align*}
 |Z(r)|\leq
 \frac{
   \eta(r)^2|J_1(r)|+|J_2(r)|
 }{\mathscr D(r)}\leq
 \frac{
   \eta(0)^2|J_1(r)|+|J_2(r)|
 }{\theta _1}.
\end{align*}
By Lemma~\ref{lemma4-3}, $\lim_{r\to\infty}J_1(r)=\lim_{r\to\infty}J_2(r)=0$. Thus, $\lim_{r\to\infty}|Z(r)|=0$, which proves \eqref{5-10}.
\end{proof}

\begin{lemma}
\label{lemma5-4}
Let $N\in\{2,3\}$ and $0<\beta<\min\{\mu _1,\mu _2\}$. Then there is no positive
radial solution
$(y_1,y_2)\in\Hrad(\R^N)\times\Hrad(\R^N)$ of
\eqref{3-5} such that $y_2(0)>y_1(0)$.
\end{lemma}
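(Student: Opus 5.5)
We argue by contradiction. Suppose $(y_1,y_2)$ is a positive radial solution of \eqref{3-5} with $Y_2>Y_1$. Let $\eta=y_2/y_1$ as in \eqref{5-2}, and let $X,\mathscr D,Z$ be as in \eqref{5-6}. By Lemma~\ref{lemma5-2}, $\eta'<0$ on some interval $(0,r_0)$. By Proposition~\ref{prop4-4}, $\mathscr J>0$, so \eqref{5-7} shows that $Z'$ has the sign of $\eta'$ wherever $\eta'\neq0$. We set
\[
 r_*:=\sup\{r>0:\ \eta'<0 \text{ on }(0,r)\}\in(0,\infty]
\]
and split into two cases.

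\emph{Case $r_*=\infty$.} Then $\eta$ is strictly decreasing on $(0,\infty)$, so $Z'<0$ on $(0,\infty)$. Since $\lim_{r\to0}Z(r)=0$ by \eqref{5-9}, we get $Z(r)<0$ for all $r>0$, and $Z$ is strictly decreasing. Hence $\lim_{r\to\infty}Z(r)<0$ (the limit may even be $-\infty$). On the other hand, $\eta$ is nonincreasing, so \eqref{5-10} gives $\lim_{r\to\infty}Z(r)=0$. This is a contradiction.

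\emph{Case $r_*<\infty$.} Then $\eta'(r_*)=0$ and $\eta'<0$ on $(0,r_*)$. Integrating \eqref{5-4} over $(0,r_*)$ and using \eqref{5-5} gives
\[
 0=r_*^my_1(r_*)^2\eta'(r_*)=-\delta\int_0^{r_*}s^my_1^4\eta(\eta^2-1)\dd s .
\]
The integrand must therefore change sign, so $\eta<1$ somewhere on $(0,r_*)$. Since $\eta$ is strictly decreasing on $(0,r_*)$, this gives $\eta(r_*)<1$. We then evaluate \eqref{5-8} at $r_*$. The three terms containing $\eta'$ vanish, leaving
\[
 X(r_*)=\frac{a\delta}{4}\eta^2y_1^4(1-\eta^2)\Big|_{r=r_*}>0,
\]
so $Z(r_*)>0$. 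But $Z'<0$ on $(0,r_*)$ and $Z(0^+)=0$ force $Z(r_*)<0$, again a contradiction.

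Both cases are impossible, which proves the lemma. The substantive ingredient is the positivity of $\mathscr J$ from Proposition~\ref{prop4-4}, which we take as given. The delicate point that remains is the bookkeeping in the second case: the conclusion $\eta(r_*)<1$ relies on strict monotonicity up to $r_*$ together with the vanishing boundary flux \eqref{5-5}. Care is also needed in the first case to use only the qualitative limits \eqref{5-9}–\eqref{5-10}, which hold because $\eta$ is nonincreasing there.
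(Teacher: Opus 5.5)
Your proof is correct and follows the paper's argument essentially step for step. Your supremum $r_*$ is the paper's first zero of $\eta'$. The flux identity \eqref{5-4}--\eqref{5-5} forces $\eta(r_*)<1$, and then the positive sign of $X(r_*)$ from \eqref{5-8} contradicts $Z(r_*)<0$. When $\eta'$ never vanishes, strict decrease of $Z$ from $Z(0^+)=0$ clashes with \eqref{5-10}.

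The only cosmetic difference is that you get $Z(r_*)<0$ directly from monotonicity of $Z$ on $(0,r_*)$ together with $\lim_{r\to0}Z(r)=0$. This makes the paper's separate check that $Z'$ is integrable at the origin unnecessary.
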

\begin{proof}
Suppose, for contradiction, that such a solution exists.  Set
\[
 Y_i:=y_i(0),
 \qquad
 \eta:=\frac{y_2}{y_1}.
\]
Since $Y_2>Y_1$, we have $\eta(0)=\frac{Y_2}{Y_1}>1$.  It follows from
\eqref{5-3} that $\eta'(0)=0$ and $\eta''(0)<0$.  Hence, for all
sufficiently small $r>0$,
\[
 \eta'(r)=\eta''(0)r+o(r)<0.
\]

We first verify that $Z'$ is integrable at the origin.  By
\eqref{5-3},
\[
 \eta'(r)=O(r),
 \qquad
 \mathscr D(r)\to
 \theta _1+\theta _2\eta(0)^2>0
 \quad\text{as }r\to 0.
\]
Recalling \eqref{4-6}, the expansion \eqref{4-9} gives
\[
 \mathscr J(r)=
 \begin{cases}
  O(r^{-\frac{2}{3}}),&N=2,\\
  O(r^{\frac{2}{3}}),&N=3,
 \end{cases}
 \qquad\text{as }r\to 0.
\]
Therefore, \eqref{5-7} yields
\[
 Z'(r)=
 \begin{cases}
  O(r^{\frac{1}{3}}),&N=2,\\
  O(r^{\frac{5}{3}}),&N=3,
 \end{cases}
 \qquad\text{as }r\to 0.
\]
Hence, $Z'$ is integrable at the origin.

We now claim that $\eta'$ has no zero.  Suppose otherwise and set
\[
 \mathcal Z_\eta:=\{r>0:\eta'(r)=0\}\ne\varnothing,
 \qquad
 r_*:=\inf\mathcal Z_\eta.
\]
Since $\eta'(r)<0$ for all sufficiently small $r>0$, there exists $r_0>0$
such that $\eta'(r)<0$ for $0<r\leq r_0$.  Hence $r_*\geq r_0>0$.  By the
definition of the infimum, for every $n\geq1$ one can choose
$r_n\in\mathcal Z_\eta$ such that
\[
 r_*\leq r_n<r_*+\frac1n.
\]
Thus $r_n\to r_*$, and continuity of $\eta'$ gives $\eta'(r_*)=0$.
Moreover, $\eta'$ has no zero on $(0,r_*)$.  Since it is negative near the
origin, continuity forces it to remain negative throughout this interval.
Consequently,
\begin{equation}\label{5-11}
 \eta'(r)<0\quad\text{for }0<r<r_*,
 \qquad
 \eta'(r_*)=0.
\end{equation}

We claim that
\begin{equation}\label{5-12}
 \eta(r_*)<1.
\end{equation}
Indeed, suppose that $\eta(r_*)\geq1$.  Since $\eta'(r)<0$ for $r\in(0,r_*)$, we have
\[  \eta(r)>\eta(r_*)\geq 1 \qquad \forall 0<r<r_*,\]
and hence
\[  y_1(r)^4\eta(r)\bigl(\eta(r)^2-1\bigr)>0 \qquad \forall 0<r<r_*.\]
Integrating \eqref{5-4} over $(0,r_*)$ and using
\eqref{5-5}, we obtain
\begin{align*}
 r_*^my_1(r_*)^2\eta'(r_*)&=
 \lim_{\rho\to 0}
 \rho^my_1(\rho)^2\eta'(\rho) -\delta\int_0^{r_*}
 s^my_1(s)^4\eta(s)\bigl(\eta(s)^2-1\bigr) \dd s\\
 &=-\delta\int_0^{r_*}s^my_1(s)^4\eta(s)\bigl(\eta(s)^2-1\bigr)\dd s<0.
\end{align*}
On the other hand, \eqref{5-11} gives $r_*^my_1(r_*)^2\eta'(r_*)=0$, 
which is a contradiction.  Thus \eqref{5-12} holds.

At $r=r_*$, the identity \eqref{5-8} and
$\eta'(r_*)=0$ yield
\begin{equation}
\begin{aligned}
 X(r_*) 
&= \frac{a(r_*)\delta}{4}\eta(r_*)^2y_1(r_*)^4 \bigl(1-\eta(r_*)^2\bigr)>0.
\end{aligned}
\end{equation}
Here, we used the facts that
\[  a(r_*)>0,\qquad \delta>0,\qquad \eta(r_*)>0,\qquad y_1(r_*)>0,\qquad 1-\eta(r_*)^2>0.\]
Since $\mathscr D(r_*)=\theta _1+\theta _2\eta(r_*)^2>0$, we conclude that
\begin{equation}\label{5-14}
 Z(r_*)=\frac{X(r_*)}{\mathscr D(r_*)}>0.
\end{equation}

On the other hand, Proposition~\ref{prop4-4} gives
$\mathscr J(r)>0$ for every $r>0$.  Together with
\eqref{5-11} and \eqref{5-7}, this yields
\[
 Z'(r)=\frac{2\eta(r)\eta'(r)}{\mathscr D(r)^2}\mathscr J(r)<0
 \qquad\text{for every }0<r<r_*.
\]
For every $0<\rho<r_*$, we have
\[
 \begin{aligned}
  Z(r_*)-Z(\rho)=\int_\rho^{r_*}Z'(s)\dd s=\int_\rho^{r_*}
    \frac{2\eta(s)\eta'(s)}{\mathscr D(s)^2}
    \mathscr J(s)\dd s.
 \end{aligned}
\]
Letting $\rho\to 0$, using \eqref{5-9} and the integrability
established above, we obtain
\[
 Z(r_*)
 =\int_0^{r_*}
   \frac{2\eta(s)\eta'(s)}{\mathscr D(s)^2}
   \mathscr J(s)\dd s<0.
\]
This contradicts \eqref{5-14}.  Hence $\eta'$ has no zero.
Since it is negative near the origin, we conclude that
\begin{equation}\label{5-15}
 \eta'(r)<0\qquad \forall r>0.
\end{equation}

Applying \eqref{5-7} once more, together with
\eqref{5-15} and $\mathscr J>0$, we get
\[
 Z'(r)<0,\qquad\text{for every }r>0.
\]
In particular, for every fixed $r_0>0$ and all $r\geq r_0$,
\[
 \begin{aligned}
 Z(r)\leq Z(r_0)=\int_0^{r_0}
   \frac{2\eta(s)\eta'(s)}{\mathscr D(s)^2}
   \mathscr J(s)\dd s<0.
 \end{aligned}
\]
However, \eqref{5-15} implies that $\eta$ is
nonincreasing.  Hence Lemma~\ref{lemma5-3} gives
$$ \lim\limits_{r\to\infty}Z(r)=0.$$
Sending $r\to\infty$ in the preceding inequality yields
\[
 0=\lim_{r\to\infty}Z(r)\leq Z(r_0)<0,
\]
which is impossible.  This completes the proof.
\end{proof}
Combining all the discussion above, we can now prove the following proposition.
\begin{proposition}
\label{prop5-5}
Let $N\in\{2,3\}$ and $0<\beta<\min\{\mu _1,\mu _2\}$.  If
$(y_1,y_2)\in\Hrad(\R^N)\times\Hrad(\R^N)$ is a positive radial 
solution of \eqref{3-5}, then
\begin{equation}\label{5-16}
 y_1(r)=y_2(r)\qquad\text{for every }r\geq0.
\end{equation}
\end{proposition}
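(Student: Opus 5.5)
The plan is a trichotomy on the central values $Y_1:=y_1(0)$ and $Y_2:=y_2(0)$, both positive by Lemma~\ref{lemma3-2}. If $Y_2>Y_1$, Lemma~\ref{lemma5-4} already says no positive radial solution of \eqref{3-5} exists, so this case cannot occur. If $Y_1=Y_2$, Lemma~\ref{lemma5-1} gives $y_1\equiv y_2$ directly. The only work left is the case $Y_1>Y_2$, and I will reduce it to Lemma~\ref{lemma5-4} by swapping the two components.

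For the swap, set $\tilde y_1:=y_2$, $\tilde y_2:=y_1$, $\tilde\theta_1:=\theta_2$ and $\tilde\theta_2:=\theta_1$. The potential $P=\theta_1y_1^2+\theta_2y_2^2=\tilde\theta_1\tilde y_1^2+\tilde\theta_2\tilde y_2^2$ is unchanged, so $\mathcal V$ is unchanged as well. Hence $(\tilde y_1,\tilde y_2)$ is a positive radial solution of \eqref{3-5} with the same $\delta,\varepsilon$, and now $\tilde y_2(0)>\tilde y_1(0)$.

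The main obstacle is that Lemma~\ref{lemma5-4} is stated for the specific $\theta_i$ coming from \eqref{3-1} with $\mu_1\le\mu_2$, whereas after the swap the weights are exchanged. One remedy is to swap the parameters as well: interchange $\mu_1$ and $\mu_2$, which is harmless because Lemma~\ref{lemma5-4} only assumes $0<\beta<\min\{\mu_1,\mu_2\}$, which is symmetric. Under this interchange, \eqref{3-1} exchanges $\theta_1$ with $\theta_2$ and $\lambda_1$ with $\lambda_2$, while $D$, $\delta$ and $\varepsilon$ stay fixed. Equivalently, $(\lambda_2\tilde y_1,\lambda_1\tilde y_2)=(v,u)$ solves \eqref{1-1} with $\mu_1,\mu_2$ interchanged, and its normalization is exactly $(\tilde y_1,\tilde y_2)$.

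Alternatively, one can check that the arguments of Sections~\ref{section4}--\ref{section5} use only $\theta_1,\theta_2>0$, $\theta_1+\theta_2=1$, $0<\delta<1$ and $\varepsilon=1-\delta>0$, all of which survive the swap. Either way, Lemma~\ref{lemma5-4} applies to $(\tilde y_1,\tilde y_2)$ and gives a contradiction. So $Y_1>Y_2$ is impossible, and the trichotomy forces $Y_1=Y_2$ and hence \eqref{5-16}.
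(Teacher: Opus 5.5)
Your proposal is correct and matches the paper's proof: the same trichotomy on $Y_1,Y_2$, with Lemma~\ref{lemma5-1} for $Y_1=Y_2$, Lemma~\ref{lemma5-4} for $Y_2>Y_1$, and the case $Y_1>Y_2$ reduced to Lemma~\ref{lemma5-4} by interchanging $(\mu_1,y_1,\theta_1)$ and $(\mu_2,y_2,\theta_2)$. Your check that this interchange fixes $D,\delta,\varepsilon$, swaps $\theta_1,\theta_2$, and preserves the symmetric hypothesis $0<\beta<\min\{\mu_1,\mu_2\}$ simply makes explicit what the paper leaves implicit.
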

\begin{proof}
Let us put $Y_1:=y_1(0)$ and $Y_2:=y_2(0)$.  We distinguish three cases.

\medskip
\noindent
\emph{\textbf{Case 1}: $Y_1=Y_2$.}
By Lemma~\ref{lemma5-1},
\[
 y_1(r)=y_2(r)
 \qquad\forall r\geq0.
\]
\medskip
\noindent
\emph{\textbf{Case 2:} $Y_2>Y_1$.}
This is impossible by Lemma~\ref{lemma5-4}.

\medskip
\noindent
\emph{\textbf{Case 3:} $Y_1>Y_2$.} Interchange the triples $(\mu_1,y_1,\theta_1)$ and $(\mu_2,y_2,\theta_2)$, and denote the relabelled components by $\tilde y_1$ and $\tilde y_2$. Then Case 3 is reduced to Case 2, and we conclude that it is also impossible.

With all this understood, we can now conclude that $y_1(r)\equiv y_2(r)$ for all $r\geq 0$. 
\end{proof}

\section{Proofs of the main results}\label{section6}

In this last section, we combine our previous results to prove the main results of this paper.
\begin{proof}[Proof of Theorem~\ref{theorem1-1}]
Let $(u,v)$ be a positive radial solution of
\eqref{1-1}.  Define
\[
 y_1:=\frac{u}{\lambda _1},
 \qquad
 y_2:=\frac{v}{\lambda _2}.
\]
By Lemma~\ref{lemma3-1}, for $i\in\{1,2\}$, we have
\[ -\Delta y_i+\mathcal V y_i=\delta y_i^3, \]
where
$ P=\theta _1y_1^2+\theta _2y_2^2$, and $\mathcal V=1-\varepsilon P$. From Proposition~\ref{prop5-5}, we get
$ y_1\equiv y_2=:y$. Since $\theta_1+\theta_2=1$, we have $ P =\theta _1y^2+\theta _2y^2 =(\theta _1+\theta _2)y^2 =y^2$. 
Thus, $\mathcal V=1-\varepsilon y^2$, and the common-potential equation reduces to
\begin{align*}
 -\Delta y+y=(\delta+\varepsilon)y^3=y^3,
\end{align*}
where we used $\delta+\varepsilon=1$. Moreover, $y\in\Hrad(\R^N)$ and $y>0$ in $\R^N$. By \cite{Kwong} we obtain $y=w$.
Returning to the original variables, we obtain
\begin{align*}
 u&=\lambda _1y_1=\sqrt{\frac{\mu _2-\beta}
                 {\mu _1\mu _2-\beta^2}}w, \quad v=\lambda _2y_2 =\sqrt{\frac{\mu _1-\beta}
                 {\mu _1\mu _2-\beta^2}}w.
\end{align*}
Hence, every positive radial solution must have the form
\eqref{1-4}.

Conversely, let $w$ be the positive radial solution given by
\cite{Kwong}, and define
\[
 u_*:=\lambda _1w,
 \qquad
 v_*:=\lambda _2w.
\]
Recall that
\[
 D=\mu _1\mu _2-\beta^2,
 \qquad
 \lambda _1^2=\frac{\mu _2-\beta}{D},
 \qquad
 \lambda _2^2=\frac{\mu _1-\beta}{D}.
\]
A direct calculation gives
\begin{align*}
 \mu _1\lambda _1^2+\beta\lambda _2^2=\frac{\mu _1(\mu _2-\beta)+\beta(\mu _1-\beta)}{D}=
 \frac{\mu _1\mu _2-\beta^2}{D} =1
\end{align*}
and
\begin{align*}
 \beta\lambda _1^2+\mu _2\lambda _2^2=
 \frac{\beta(\mu _2-\beta)
       +\mu _2(\mu _1-\beta)}{D}=\frac{\mu _1\mu _2-\beta^2}{D}=1.
\end{align*}
Since $w$ satisfies $-\Delta w+w=w^3$, we have
\begin{align*}
 -\Delta u_*+u_*=\lambda _1(-\Delta w+w)=\lambda _1w^3,
\end{align*}
whereas
\begin{align*}
 \mu _1u_*^3+\beta u_*v_*^2= \mu _1\lambda _1^3w^3 +\beta\lambda _1\lambda _2^2w^3= \lambda _1 \bigl(\mu _1\lambda _1^2+\beta\lambda _2^2\bigr)w^3=\lambda _1w^3.
\end{align*}
Therefore,
\[
 -\Delta u_*+u_*
 =\mu _1u_*^3+\beta u_*v_*^2.
\]
Similarly, we have
\[
 -\Delta v_*+v_*=\mu _2v_*^3+\beta u_*^2v_*.
\]
Hence, $(u_*,v_*)$ is a positive radial solution of
\eqref{1-1}. Because the first part of the proof shows that every
positive radial  solution equals $(u_*,v_*)$, the solution is unique.
This finishes the proof.
\end{proof}

\begin{proof}[Proof of Corollary \ref{cor1-3}.]
This is a direct consequence of Theorem~\ref{theorem1-1} and \cite[Theorem 2]{BuscaSirakov}, so we omit the details.
\end{proof}

\begin{remark}
The proof requires no monotonicity assumption on either $y_i$ or
$\mathcal V$, no sign condition on $\mathcal V(0)$, and no additional
condition involving $r\mathcal V'$.  The relation
$\mathcal V=1-\varepsilon P$ cancels the two
terms involving the unknown potential in $\mathscr K'$.  Positivity of
$\mathscr J$ then relates the signs of $\eta'$ and $Z'$.
\end{remark}

\begin{remark}
When $\beta=0$, uniqueness still holds among pairs that are radial about a
prescribed common centre, but Corollary~\ref{cor1-3} fails because
the two components may be translated independently.  On the other hand, at the upper endpoint,
if $\mu _1=\mu _2=\beta=\mu$, the family
\begin{equation}
\label{6.1}
 (u,v)=\mu^{-\frac{1}{2}}(\cos\vartheta\,w,\sin\vartheta\,w),
 \qquad 0<\vartheta<\frac\pi2
\end{equation}
provides all the positive solutions. Indeed, without loss of generality we may assume that $u,~v$ are radially symmetric  with respect to $0$. Then equation \eqref{1-1} can be written as
\begin{align*}
\begin{cases}
u''+\frac{m}{r}u'-u+\mu(u^2+v^2)u=0,\\
v''+\frac{m}{r}v'-v+\mu(u^2+v^2)v=0,\\
u'(0)=v'(0)=0.
\end{cases}
\end{align*}
Then we can derive that
$$(r^m(uv'-vu'))'=0\quad\mbox{and}\quad r^m(uv'-vu')|_{r=0}=0.$$
Consequently
$$r^m(uv'-vu')\equiv 0\quad\mbox{for all} ~r>0.$$
It implies that $\frac{u}{v}\equiv C$ and all the positive solutions of \eqref{1-1} can be written as \eqref{6.1} .
Therefore, the condition $\beta>0$ is needed for
the uniqueness modulo simultaneous translation in
Corollary~\ref{cor1-3}, whereas the upper endpoint cannot be included
in the radial uniqueness statement of Theorem~\ref{theorem1-1}.
\end{remark}

\section*{\bf Acknowledgements}
Hong-Ge Chen was supported by the National Natural Science Foundation of China (Grant Nos.~12201607 and~12571249) and the Postdoctoral Project of Hubei Province (Grant No.~2024HBBHXF095). Yong Liu was supported by  National Natural Science Foundation of China No. 12471204. Juncheng Wei was supported by National R\&D Program of China (Grant No. 2022YFA1005602), and Hong Kong General Research Fund ``New frontiers in singular limits of nonlinear partial differential equation". Wen Yang was supported by the National Key Research and Development Program of China (Grant No.~2022YFA1006800), the National Natural Science Foundation of China
(Grant No. 12531010), the Science and Technology Development Fund of the Macao SAR (FDCT, Grant No.~0070/2024/RIA1), the Multi-Year Research Grants of the University
of Macau (Grant No. MYRG-GRG2025-00051-FST), and the University of Macau Development Foundation (Grant No.~TISF/2025/006/FST).
\medskip

\noindent {\bf Data availability statement}: There are no data associated with this article.
\medskip

\noindent {\bf AI assistance statement}: The authors used AI models to assist with calculations and writings; the main ideas, mathematical validation, and all final checks remain the sole responsibility of the human authors.

\end{document}